\theoremstyle{plain}
\newtheorem{thm}{Theorem}[section]
\newtheorem{prop}[thm]{Proposition}
\newtheorem{lem}[thm]{Lemma}
\newtheorem{conj}[thm]{Conjecture}
\newtheorem{prob}{Problem}
\theoremstyle{definition}
\newtheorem{ex}[thm]{Example}
\theoremstyle{remark}
\newtheorem{rem}[thm]{Remark}
\newcommand{\sE}{\mathbb{E}}
\newcommand{\sP}{\mathbb{P}}
\newcommand{\var}{\mathrm{Var}}
\newcommand{\cov}{\mathrm{Cov}}
\newcommand{\rd}{\mathrm{d}}
\newcommand{\RR}{\mathbb{R}}
\newcommand{\Ex}{\text{Ex}}
\newcommand{\Sym}{\text{Sym}}
\title{ 
$\phantom{a}$\\[-2cm] 
Efficient simulation of Brown-Resnick processes\\
based on variance reduction of Gaussian processes
}
\author{Marco Oesting\footnote{University of Siegen, Department Mathematik, 57072 Siegen, Germany, Email: oesting@mathematik.uni-siegen.de} \,\, and \, Kirstin Strokorb\footnote{Cardiff University,  School of Mathematics, Cardiff~CF24\,4AG, UK, Email: StrokorbK@cardiff.ac.uk}}
\date{\today}
\begin{document}

\maketitle

\begin{abstract}
$\phantom{a}$\\[-4mm] 
Brown-Resnick processes are max-stable processes that are associated to Gaussian
processes. Their simulation is often based on the corresponding spectral 
representation which is not unique. We study to what extent simulation accuracy
and efficiency can be improved by minimizing the maximal variance of the 
underlying Gaussian process. Such a minimization is a difficult mathematical 
problem that also depends on the geometry of the simulation domain. We extend 
Matheron's (1974) seminal contribution in two aspects: (i) making his 
description of a minimal maximal variance explicit for convex variograms on 
symmetric domains and (ii) proving that the same strategy reduces the maximal 
variance also for a huge class of non-convex variograms representable through a
Bernstein function. A simulation study confirms that our non-costly modification
can lead to substantial improvements among Gaussian representations. We also compare
 it with three other established algorithms.
\end{abstract}

{\small
  \noindent \textit{Keywords}: {\textcolor{black}{Brown-Resnick process}, \textcolor{black}{Gaussian process},
   max-stable process, \textcolor{black}{simulation}, spatial extremes, \textcolor{black}{variance reduction}, 
   variogram}
\smallskip\\
  \noindent \textit{2010 MSC}: {Primary 60G70; 60G15}\\ 
  \phantom{\textit{2010 MSC}:} {Secondary 60G60} 
}

\enlargethispage{4mm}
\section{Introduction}

Many powerful tools in geostatistics are conveniently based on Gaussian 
processes as an underlying probabilistic model for uncertainty \citep{cd,
handbookSPATSTAT}. By contrast, assessing the extreme values of spatial data 
genuinely requires statistical methodology that goes beyond such tools. A 
common approach from extreme value analysis is the usage of max-stable models
instead. In particular, the class of Brown-Resnick processes \citep{BR77, 
ksdh09} has emerged as a now widely adopted class of processes considered in 
the analysis of spatial data, cf.\ e.g.\ \cite{river15, buhl16, davhusthib13, 
ekks16, emks15, Bel13, friedrichs17, taperCL14, stein17, thibaud16}. 

There is a strong connection between Brown-Resnick processes and Gaussian
processes: First, Brown-Resnick processes arise as the only possible 
non-degenerate limits of maxima of appropriately rescaled independent Gaussian
processes \citep{ksdh09,kabluchko11}. Second, they can be represented as maxima
of a convolution of the points of a Poisson point process and Gaussian processes.
As the number of Gaussian processes involved in the maximum is locally finite,
Brown-Resnick processes still inherit various properties from Gaussian 
processes on a local level. They are parsimonious models in the sense that 
their law is fully specified by a bivariate quantity, namely the variogram of
the underlying Gaussian process. On the other hand, they are still very 
flexible in the sense that various features such as smoothness, scale or nugget
effect can be controlled by the choice of variogram family. All of this makes
Brown-Resnick processes popular consistent spatial models and marks their 
status as a benchmark in spatial extremes.

To extract probabilistic properties of interest from a fitted Brown-Resnick 
model, it is usually necessary to be able to efficiently simulate from the 
fitted model. Meanwhile, several approaches for this task have been developed.
Starting from the basic threshold stopping approach based on the work of
\cite{schlather02} using plainly the original definition of a Brown-Resnick 
process, \cite{oks12}, \cite{dm15} and \cite{osz18} achieved further 
improvements that are based on modified spectral representations. Based on 
different techniques, \cite{deo16} and \cite{lbdm16} proposed the extremal 
functions and the record-breakers approach, respectively, both of which 
together with the normalization method of \cite{osz18} can now be seen as 
state-of-the-art algorithms for the exact simulation of Brown-Resnick processes. 

When dealing with spatial data, the study area on which the process should be 
simulated may be large with respect to its spatial extent or the number of
locations therein. In such situations, exact simulation of Brown-Resnick 
processes via one of the state-of-art algorithms can be very time-consuming and
depending on the purpose of the application it can be more appropriate to admit
(desirably small) simulation errors. Once a trade-off between accuracy and 
efficiency is necessary, it is no longer clear which of the previously 
considered simulation approaches performs ``best''. 
Keeping this in mind, we return to the initial threshold stopping approach 
devised by \cite{schlather02} from a new perspective in this work. We explore
to what extent a modified choice of Gaussian spectral representation of
Brown-Resnick processes can improve the efficiency or accuracy of the 
simulation and whether an improved threshold
stopping approach can compete with state-of-the-art simulation when an error is
admitted. Dealing with these questions ultimately leads us to a classical
minimization problem for Gaussian processes, namely, to find a Gaussian process 
whose maximal variance across the simulation domain
is minimized, while its variogram on this domain is fixed. In this regard, we
extent Matheron's (\citeyear{matheron74}) seminal contribution in two aspects: 
(i) making his description of a minimal maximal variance explicit for convex 
variograms on symmetric domains and (ii) proving that the same strategy reduces
the maximal variance also for the huge class of non-convex variograms that can
be represented via Bernstein functions. 

The manuscript is organized as follows. Section~\ref{sec:specrep} recalls the 
spectral representation of Brown-Resnick processes and revisits Schlather's 
threshold stopping algorithm for the simulation of max-stable processes on a
compact domain. We explain why it is beneficial for the underlying Gaussian
process to have a reduced maximal variance across the simulation domain. 
Subsequently, Section~\ref{sec:minloggauss}, the main contribution, provides two
results complementing \cite{matheron74} for the corresponding minimization 
problem and elaborates on discretization effects. Throughout the text, the most 
popular family of Brown-Resnick processes that are associated to fractional
Brownian sheets, figure as an example. In Section~\ref{sec:study} we report the
setup and results of a simulation study for this family of models, where we also
compare our approach with three other methods. Finally, we end with a discussion
of our findings in Section~\ref{sec:discussion}. Proofs and additional auxiliary 
results are deferred to Appendix~\ref{sec:proofs}.

\section{Spectral representations and threshold stopping}
\label{sec:specrep}

Let $Z=\{Z(\bm{x})\}_{\bm{x} \in \RR^d}$ be a \emph{simple max-stable process}
on $\RR^d$, which means that for each $n \in \mathbb{N}$ and $n$ i.i.d.\ copies 
$Z_1,\dots,Z_n$ of $Z$, the process $Z_1 \vee \dots \vee Z_n$ of the pointwise 
maxima has the same law as $nZ$ and that $Z$ has \emph{standard Fr\'echet} 
margins: $\sP(Z(\bm{x}) \leq z)=\exp(-1/z)$ for $z>0$, $\bm{x} \in \RR^d$. It 
was shown by \cite{dh84} that, if $Z$ is continuous in probability, there 
exists a non-negative stochastic process $V=\{V(\bm{x})\}_{\bm{x} \in \RR^d}$ 
on $\RR^d$ with $\sE(V(\bm{x}))=1$, $\bm{x} \in \RR^d$, such that the law of 
the simple max-stable process $Z$ is recovered from the following max-series 
\begin{align} \label{eq:spec-rep}
 \{Z(\bm{x})\}_{\bm{x} \in \RR^d} \stackrel{\mathcal{D}}{=} \bigg\{\bigvee_{k=1}^\infty U_k V_k(\bm{x})\bigg\}_{\bm{x} \in \RR^d}.
\end{align}
Here $\{U_k\}_{k=1}^\infty$ denotes a Poisson point process on $(0,\infty)$
with intensity measure $u^{-2} \rd u$, which is independent of the i.i.d.\ 
sequence $\{V_k\}_{k=1}^\infty$ of copies of $V$. The process $V$ is often 
called \emph{spectral process} and the representation \eqref{eq:spec-rep} 
referred to as \emph{spectral representation} of $Z$.
By \cite{ghv90}, if $Z$ has continuous sample paths, the trajectories of $V$
will also be continuous and vice versa, such that the sequence 
$\{(U_k,V_k)\}_{k=1}^\infty$ may be considered as an independently marked
Poisson process $\Pi$ on $(0,\infty) \times C(\RR^d)$ with intensity measure
\begin{align*} 
\Lambda((u,\infty) \times A) = u^{-1} \sP(V \in A), \quad u>0, \ A \subset C(\RR^d) \ \text{Borel},
\end{align*}
where $C(\RR^d)$ is endowed with the usual topology of uniform convergence on
compact subsets. 

One popular choice for the spectral process $V$ of a max-stable process $Z$ is 
a log-Gaussian process of the form 
\begin{align}\label{eq:BRspectralprocess}
V(\bm{x})=e^{\eta(\bm{x})} \quad \text{with} \quad \eta(\bm{x})=W(\bm{x})-\frac{\var(W(\bm{x}))}{2}, \quad \bm{x} \in \RR^d,
\end{align}
where $W$ is a zero-mean Gaussian process $W$ with stationary increments. The 
associated max-stable process $Z$, called \emph{Brown-Resnick process}, was 
introduced and many of its properties were analysed in \cite{ksdh09}.
The requirement that $W$ has \emph{stationary increments} means that the law of
the process $\{W(\bm{x}+\bm{h})-W(\bm{x})\}_{\bm{h} \in \RR^d}$ does not depend
on $\bm{x} \in \RR^d$. Since $W$ is zero-mean Gaussian, it is equivalent to the
\emph{intrinsic stationarity} of the process $W$, that is, the stationarity of 
the process $\{W(\bm{x}+\bm{h})-W(\bm{x})\}_{\bm{x} \in \RR^{d}}$ for all 
$\bm{h} \in \RR^d$, cf.\ e.g.~page~108 in \cite{str13}. It ensures that the 
resulting max-stable process $Z$ is stationary and its law is uniquely specified 
by the variogram
\begin{align*}
\gamma(\bm{x}-\bm{y})=\sE (W(\bm{x})-W(\bm{y}))^2, \quad \bm{x},\bm{y} \in \RR^d.
\end{align*}
\cite{analogies01} show that a function $\gamma:\RR^d \rightarrow \RR$ is a 
(not necessarily centered) variogram of an intrinsically stationary Gaussian
random field $W$ if and only if $\gamma(\bm{0})=0$ and $\gamma$ is 
\emph{negative definite} in the sense that $\gamma(-\bm{x})=\gamma(\bm{x})$ for
$\bm{x} \in \RR^d$ and
\begin{align} \label{eq:NDgamma}
\sum_{i=1}^n \sum_{j=1}^n a_i \gamma(\bm{x}_i - \bm{x}_j) a_j \leq 0
\end{align}
for all finite systems $\bm{x}_1,\dots,\bm{x}_n \in \RR^d$ and $a_1,\dots,a_n \in \RR$ with $a_1+\dots+a_n=0$. 
The most popular family of variograms used in practice is 
$\gamma(\bm{h})= \lVert \bm{h}/s \rVert^\alpha$ for some $\alpha \in (0,2]$ and
$s > 0$ corresponding to \emph{fractional Brownian sheets}. This family of
variograms will also serve as an illustrating example throughout this text. It 
can be shown that all fractional Brownian sheets admit continuous trajectories 
\citep[cf.\ Thm.~1.4.1.\ in][for instance]{adler-taylor}. Hence, the resulting 
max-stable process is also continuous.



Henceforth, we will restrict our attention to a compact subset $K\subset\RR^d$.
Let $\{E_i\}_{i=1}^\infty$ and $\{V_k\}_{k=1}^\infty$ be two independent
i.i.d.\ sequences of standard exponential random variables and copies of $V$, 
respectively, and set $U_k=(\sum_{i=1}^k E_i)^{-1}$, $k=1,2,\dots$.  As the 
points $\{(U_k, V_k)\}_{k=1}^\infty$ form a Poisson point process with 
intensity measure $\Lambda$, the finite approximations
\begin{align*} 
Z^{(n)}(\bm{x}) = \bigvee_{k=1}^n U_k V_k(\bm{x}), \quad \bm{x} \in K,
\end{align*}
almost surely converge to a limit process $Z^{(\infty)}$ (as $n \to \infty$) 
satisfying $Z^{(\infty)} \stackrel{\mathcal{D}}{=} Z$.
To obtain an approximation {built from} a finite number
of exponential random variables $E_k$ and stochastic processes $V_k$, often the process 
$Z^{(T)}$ is considered where $T$ is a stopping time defined as
\begin{align} \label{eq:time}
T = T_{K,\tau} = \min\Big\{k \geq 1: U_{k+1} \tau\leq \inf_{\bm{x} \in K} Z^{(k)}(\bm{x})\Big\}
\end{align}
for some $\tau>0$. 
Such an approximation $Z^{(T)}$ yields an exact simulation of a general max-stable 
process $Z$, i.e.\ $Z^{(T)}=Z^{(\infty)}$ a.s., if the condition 
$\sup_{\bm{x} \in K} V(\bm{x}) < \tau$
is a.s.\ satisfied and for such situations this approach has been proposed in 
\cite{schlather02}. 
In the case when $V$ is log-Gaussian, however, as in 
\eqref{eq:BRspectralprocess}, an approximation error may occur.
The probability that such an error occurs is given by
 \begin{align}
  \notag{} 
\mathcal{P}_{K,\tau} 
 ={}&
\sP(Z^{(T)} \neq Z^{(\infty)} \text{ on } K)
\\
\label{eq:prob-error}
 ={}&  1 - \sE_{Z^{(T)}}\bigg\{\exp\bigg(-\sE_V\bigg(\sup_{\bm{x} \in K} \frac{V(\bm{x})}{Z^{(T)}(\bm{x})} - \sup_{\bm{x} \in K} \frac{\tau}{Z^{(T)}(\bm{x})}\bigg)_+\bigg)\bigg\}
 \end{align}
\citep{os18review}, where the symbol $\sE$ with a stochastic process
as a subscript means that the expectation is meant with respect to this process.

The situation is particulary difficult, when the spectral functions' variance $\var(V(\bm{x}))$ is large, since trajectories $U_kV_k(\bm{x})$ that are generated late in a hypothetical unstopped simulation can have an outsize influence on the sample path, i.e.\ stopping too soon has profound consequences on the quality of the output. Specifically, for Brown-Resnick processes, the variance $\var(V(\bm x))$ of the log-Gaussian spectral functions in \eqref{eq:BRspectralprocess} is of order $\exp({\var(W(\bm{x}))})$ and this order can be excessive depending on the values of $\var(W(\bm{x}))$ on the simulation domain $K$. For instance, if $W$ is the standard (``original'') fractional Brownian sheet that vanishes at the origin $\bm{0}\in \RR^d$ (i.e.\ $W(\bm{0})=0$ a.s.), the variance $\var(V(\bm{x}))$ is exponential in $\lVert \bm{x} \rVert^\alpha$. 
This poses a major challenge for stopping-time based approximate simulation.

What might however mitigate this challenge to some extent is the fact that there
is still some choice among the log-Gaussian representations of Brown-Resnick
processes. While the variogram $\gamma$ uniquely determines the law of the 
associated Brown-Resnick process $Z$, it does not uniquely determine the law of
the Gaussian process $W$ in its spectral representation \eqref{eq:BRspectralprocess}. 
Various covariance functions $C(\bm{x},\bm{y})=\cov(W(\bm{x}),W(\bm{y}))$ on $K$
share the same variogram 
\begin{align*} 
\gamma(\bm{x}-\bm{y})=C(\bm{x},\bm{x}) - 2 C(\bm{x},\bm{y}) + C(\bm{y},\bm{y}),
\quad \bm{x},\bm{y} \in K.
\end{align*}
In what follows, we seek to find such Gaussian representations $W$ for a prescribed variogram $\gamma$, whose variances $\var(V(\bm{x}))$ are uniformly small over the simulation domain $K$. More precisely, we address the following problem in Section~\ref{sec:minloggauss} and study the consequences of using low-variance Gaussian representations in the threshold stopping procedure in Section~\ref{sec:study}.

\begin{prob} \label{prob:minvar}
Let $\mathcal{W}_K(\gamma)$ be the set of Gaussian processes with variogram 
$\gamma$ on the compact simulation window $K$. If it exists, identify a Gaussian
process $W_{\min}\in\mathcal{W}_K(\gamma)$ that minimizes the functional
\begin{align*}
W \mapsto \sup_{\bm{x} \in K} \var(W(\bm{x})).
\end{align*}
\end{prob}

\begin{rem} Addressing Problem~\ref{prob:minvar} will also be beneficial for reducing the error probability $\mathcal{P}_{K,\tau}$ in \eqref{eq:prob-error} for high thresholds $\tau$ or several other error terms in 
\cite{os18review}. To see this, note that Proposition~\ref{prop:variance-tail} in the
Appendix~\ref{sec:proofs} ensures that, for each positive $z \in C(K)$, the tail probabilities
\begin{align*} 
\sP\bigg(\sup_{\bm{x} \in K} \frac{V(\bm{x})}{{z(\bm{x})}} > u\bigg)
= \sP\bigg(\sup_{\bm{x} \in K} \exp\bigg\{ W(\bm{x})-\frac{\var(W(\bm{x}))}{2} - \log z(\bm{x}) \bigg\} > u\bigg)
\end{align*}
decay as fast as possible as $u \uparrow \infty$ if $V$ is built from the solution of Problem~\ref{prob:minvar}.
This property entails that 
\begin{align*} 
\sE \bigg( \sup_{\bm{x} \in K} \frac{V(\bm{x})}{{z(\bm{x})}} - \sup_{\bm{x} \in K} \frac{\tau}{{z(\bm{x})}}\bigg)_+ 
   = \int_{\tau \, \sup_{\bm{x} \in K} {{1/z(\bm{x})}}}^\infty \sP\bigg(\sup_{\bm{x} \in K} \frac{V(\bm{x})}{{z(\bm{x})}} > u\bigg) \, \rd u
\end{align*}
becomes smaller for {each} positive function $z \in C(K)$ and for high thresholds $\tau>0$.
\end{rem}

\section{Minimal log-Gaussian representations} \label{sec:minloggauss}

To the best of our knowledge, Problem~\ref{prob:minvar} has first been 
addressed by \citet{matheron74} who introduced the notion of the minimal
representation of an intrinsically stationary process. Starting from the fact
that -- given a specific centered Gaussian process $W_0$ on $K$ with variogram
$\gamma$ -- any other centered Gaussian process $W$ on $K$
with the same variogram is of the form $W(\bm{x}) = W_0(\bm{x}) + B$ for some 
square-integrable random variable $B$, he defined the minimal representation of 
$W_0$ as the process $W_{\min}(\bm{x}) = W_0(\bm{x}) + B_{\min}$ such that 
\begin{align*}
\sup_{\bm{x} \in K} \var(W_{\min}(\bm{x}))
 = \inf_{B \in L^2(\Omega, \mathcal{A}, \sP)} \sup_{\bm{x} \in K} \var(W_0(\bm{x}) + B), 
\end{align*}
i.e.\ the process whose covariance function $C_{\min}$ is the solution of
Problem~\ref{prob:minvar}.
\citet{matheron74} further showed that, for any sample-continuous 
intrinsically stationary Gaussian process, a unique minimal representation 
exists and has the form
\begin{align}\label{eq:minmeasure}
W_{\min}(\bm{x}) = W_0(\bm{x}) - \int_K W_0(\tilde{\bm{x}}) \, \lambda_{\min}(\rd\tilde{\bm{x}}), \quad \bm{x} \in K,
\end{align}
for some probability measure $\lambda_{\min}$ on $K$. The resulting covariance
function $C_{\min}$ on $K\times K$ can be obtained from
\begin{align} 
\notag 2C_{\min}(\bm{x},\bm{y}) = - \gamma(\bm{x}-\bm{y}) &+ \int_K \gamma(\bm{x}- \tilde{\bm{x}}) \, \lambda_{\min}(\rd\tilde{\bm{x}}) \\
\label{eq:covmin} &+  \int_K \gamma(\bm{y}- \tilde{\bm{y}}) \, \lambda_{\min}(\rd\tilde{\bm{y}})
              - \int_K \int_K \gamma(\tilde{\bm{x}}- \tilde{\bm{y}}) \, \lambda_{\min}(\rd\tilde{\bm{x}}) \, \lambda_{\min}(\rd\tilde{\bm{y}}). 
\end{align}       
Note that the covariance and, consequently, the law of the minimal representation 
does not depend on the initially chosen random field $W_0$.

\subsection{Minimal solution for convex variograms}

If the variogram $\gamma$ is convex and regular in the sense that for two probability measures $\lambda$ and $\mu$ on the compact domain $K$ the equality
\begin{align*}
\int_K \int_K \gamma(\bm{x}-\bm{y}) (\lambda - \mu) (\rd\bm{x}) (\lambda - \mu) (\rd\bm{y}) = 0
\end{align*}
implies $\lambda = \mu$, then \citet{matheron74} characterizes the 
minimizing probability measure $\lambda_{\min}$ by the following two properties
\begin{align}
\label{eq:matheroncond1} \text{Supp}(\lambda_{\min}) &\subset \Ex(K),\\
\label{eq:matheroncond2}
\int_K \gamma(\bm{x}- \tilde{\bm{y}}) \, \lambda_{\min}(\rd\tilde{\bm{y}}) &\leq
\int_K \int_K \gamma(\tilde{\bm{x}}- \tilde{\bm{y}}) \, \lambda_{\min}(\rd\tilde{\bm{x}}) \, \lambda_{\min}(\rd\tilde{\bm{y}}) 
\quad \text{ for all } \bm{x} \in \Ex(K).
\end{align}
Here, $\Ex(K)$ denotes the set of \emph{extremal points} of $K$, i.e.\ the elements of $K$ that cannot be decomposed non-trivially as a convex combination of any two other points of $K$. For instance, the $2^d$ vertices $(\pm R_1,\pm R_2,\dots, \pm R_d)$ of a  $d$-dimensional hyperrectangle $K=\prod_{i=1}^d[-R_i,R_i]$ form its extremal points. In fact, hyperrectangles are the most natural simulation domains that  we consider in practice and we will be mainly interested in this case. For a hyperrectangle $K=\prod_{i=1}^d[-R_i,R_i]$ it is also often convenient to label its vertex set $\Ex(K)$ by subsets $A$ of $\{1,\dots,d\}$ through
\begin{align}\label{eq:vertexlabel}
\Ex\Big(\prod_{i=1}^d[-R_i,R_i]\Big)=\{\bm{v}_A \,:\, A \subset  \{1,\dots,d\}\}, 
   \quad \bm{v}_A=(\sigma^A_i R_i)_{i=1}^d, \quad \sigma^A_i=\begin{cases}+1 & i \in A \\ -1 & i \not\in A\end{cases}.
\end{align}
When the variogram $\gamma$ and the simulation domain $K$ are sufficiently symmetric, the measure $\lambda_{\min}$ can be made even more explicit. To this end, we refer to the set of orthogonal transformations $\bm{M}\in O(\RR^d)$, such that $\bm{M}A=\{\bm{M}\bm{a}\,:\,\bm{a}\in A\}$ coincides with $A$, as the \emph{symmetry group} $\Sym(A)$ of a set $A \subset \RR^d$.

\begin{prop}\label{prop:convex}
Let $W_0$ be a sample-continuous intrinsically stationary Gaussian process on
$\RR^d$ with convex variogram $\gamma(\bm{h})=\psi(\lVert \bm{h}\rVert^2)$, 
$\bm{h} \in \RR^d$. Let $K \subset \RR^d$ be a compact domain, whose 
symmetry group ${\normalfont \Sym}({\normalfont \Ex}(K))$ acts transitively on the set of extremal points ${\normalfont \Ex}(K)$. Then the minimizing measure $\lambda_{\min}$ in the sense of \eqref{eq:minmeasure} is the uniform distribution on ${\normalfont \Ex}(K)$.
\end{prop}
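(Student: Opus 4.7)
The plan is to appeal directly to Matheron's characterisation in \eqref{eq:matheroncond1}--\eqref{eq:matheroncond2} and verify that the uniform probability measure $\lambda$ on the finite set $\Ex(K)$ satisfies both properties. Condition \eqref{eq:matheroncond1} is immediate from the definition of $\lambda$.

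For condition \eqref{eq:matheroncond2}, the crucial observation is that the hypothesis $\gamma(\bm{h})=\psi(\lVert\bm{h}\rVert^2)$ makes $\gamma$ invariant under every orthogonal transformation, i.e.\ $\gamma(\bm{M}\bm{h})=\gamma(\bm{h})$ for all $\bm{M}\in O(\RR^d)$. Combined with the transitivity of $\Sym(\Ex(K))$ on $\Ex(K)$ --- which in particular implies that every element of $\Sym(\Ex(K))$ permutes $\Ex(K)$ --- this allows me to argue that the single integral
$$f(\bm{x}) := \int_K \gamma(\bm{x}-\tilde{\bm{y}}) \, \lambda(\rd\tilde{\bm{y}}) = \frac{1}{|\Ex(K)|}\sum_{\bm{v}\in\Ex(K)} \gamma(\bm{x}-\bm{v})$$
is constant in $\bm{x}\in\Ex(K)$. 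Indeed, given extremal points $\bm{x},\bm{x}'$ and a symmetry $\bm{M}\in\Sym(\Ex(K))$ with $\bm{M}\bm{x}=\bm{x}'$, the change of variables $\bm{v}\mapsto\bm{M}\bm{v}$ preserves the range of summation, while orthogonal invariance of $\gamma$ turns each summand $\gamma(\bm{x}-\bm{v})$ into $\gamma(\bm{M}\bm{x}-\bm{M}\bm{v})=\gamma(\bm{x}'-\bm{M}\bm{v})$.

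Denoting the common value of $f$ on $\Ex(K)$ by $c$, the double integral on the right-hand side of \eqref{eq:matheroncond2} reduces to $\int_K f(\tilde{\bm{x}}) \, \lambda(\rd\tilde{\bm{x}}) = c$, so condition \eqref{eq:matheroncond2} holds with equality. By Matheron's characterisation this identifies $\lambda$ as the minimising measure $\lambda_{\min}$, as claimed. The argument is essentially a symmetry computation, so no real obstacle arises; the only mild subtlety is to ensure that Matheron's characterisation is applicable, which relies on the convexity of $\gamma$ assumed in the proposition (and, implicitly, the regularity condition from the discussion preceding \eqref{eq:matheroncond1} to pin down $\lambda_{\min}$ uniquely).
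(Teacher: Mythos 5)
Your proof is correct and follows essentially the same route as the paper: verify Matheron's two conditions \eqref{eq:matheroncond1}--\eqref{eq:matheroncond2} for the uniform measure on $\Ex(K)$, with \eqref{eq:matheroncond2} obtained from the rotation invariance of $\gamma(\bm{h})=\psi(\lVert\bm{h}\rVert^2)$ and the transitivity of $\Sym(\Ex(K))$ via a change of variables. Two small points of divergence are worth noting. First, you write the uniform measure as the normalized counting measure $\frac{1}{|\Ex(K)|}\sum_{\bm{v}\in\Ex(K)}\delta_{\bm{v}}$, which tacitly assumes $\Ex(K)$ is finite; the proposition also covers domains such as Euclidean balls, where $\Ex(K)$ is a sphere, and ``uniform distribution'' must then be read as the normalized invariant (Haar) measure on the homogeneous space $\Ex(K)$ --- the paper spends the opening lines of its proof constructing exactly this object. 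Your change-of-variables step survives verbatim once the sum is replaced by an integral against this invariant measure, so this is a gap in generality of the write-up rather than in the idea. Second, the paper additionally uses convexity of $\bm{x}\mapsto\int_K\gamma(\bm{x}-\tilde{\bm{y}})\,\lambda(\rd\tilde{\bm{y}})$ to identify the common value on $\Ex(K)$ as the maximum of this function over all of $K$; since condition \eqref{eq:matheroncond2} is only required at extremal points, your shorter argument (equality at every point of $\Ex(K)$) already suffices for the condition as stated, so this is an economy rather than an omission. As you correctly note, convexity of $\gamma$ still enters through the applicability of Matheron's characterisation itself.
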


\begin{ex} \label{example:fBMconvex}
Let $W_0$ be a fractional Brownian sheet on $\RR^d$ with variogram 
$\gamma(\bm{h})=\lVert \bm{h}/s\rVert^\alpha$ for some $\alpha \in [1,2)$, 
$s>0$. Let $K=\prod_{i=1}^d[-R_i,R_i]$ be a $d$-dimensional hyperrectangle,
whose vertices $\bm{v}_A$ are labelled by subsets $A \subset \{1,\dots,d\}$ as
in \eqref{eq:vertexlabel}. Then Proposition~\ref{prop:convex} applies and says
that the process
\begin{align} \label{eq:fBMconvex}
W(\bm{x}) = W_0(\bm{x}) - \frac{1}{2^d} \sum_{A \subset \{1,\dots,d\}} W_0(\bm{v}_A), \quad \bm{x} \in K,
\end{align}
possesses the smallest maximal variance on $K$ among the family of intrinsically stationary 
Gaussian processes with variogram $\gamma$.
\end{ex}

\begin{rem}
\cite{matheron74} provides the argument for  \eqref{eq:matheroncond1} and \eqref{eq:matheroncond2} being necessary and sufficient for $\lambda_{\min}$ to be the minimal measure for compact and convex sets $K$. It is however easily checked that convexity of $K$ can be dispensed with. 
\end{rem}

\subsection{Reduction of maximal variance for non-convex variograms}

For non-convex variograms it is not so clear how to obtain the minimizing
measure $\lambda_{\min}$ explicitly, not even for hyperrectangles. However, in
many situations it is still possible to apply the same strategy as in 
Example~\ref{example:fBMconvex} to substantially reduce the maximal variance. 
At least we show below that this is possible when the variogram $\gamma$ can be 
represented as $\gamma(\bm{h})=\psi(\lVert \bm{h} \rVert^2)$, $\bm{h}\in\RR^d$,
for a Bernstein function $\psi$. 

There are several definitions of Bernstein functions and various properties and 
examples have been summarized in the recent monograph \cite{schillingBernstein}.
For us it will be convenient to define a \emph{Bernstein function} as a function
$\psi:\RR_+\rightarrow \RR$ on the positive real line $\RR_+=[0,\infty)$ that is
bounded from below and \emph{negative definite} in the sense that
\begin{align}\label{eq:NDpsi}
\sum_{i=1}^n \sum_{j=1}^n a_i \psi(s_i+s_j) a_j \leq 0
\end{align}
for all finite systems $s_1,\dots,s_n \in \RR_+$ and $a_1,\dots,a_n\in \RR$ with 
$a_1+\dots+a_n=0$, cf.~page~113/114 of \cite{bcr84}. Note that we deviate from 
most of the literature where continuity at $0$ is additionally required. The 
following lemma shows that considering only variograms that can be represented 
by Bernstein functions is not a serious restriction and, secondly, that Bernstein
functions obey certain monotonicity properties defined as follows: A function 
$f:\RR_+\rightarrow\RR$ is \emph{$n$-alternating} if it satisfies 
\begin{align}\label{eq:CApsi} 
\Delta_{s_1}\Delta_{s_2}\dots \Delta_{s_n}f(s) = \sum_{A \subset \{1,\dots,n\}} (-1)^{\lvert A \rvert} f\bigg(s + \sum_{i\in A} s_i\bigg) \leq 0
\end{align} 
for $s,s_1,s_2,\dots,s_n \in \RR_+$. In particular, $f$ is $1$-alternating
if and only if it is non-decreasing.

\begin{lem} \label{lemma:Bernstein}
Let $\psi:\RR_+ \rightarrow \RR$ be bounded from below. Then the following 
statements are equivalent.
\begin{enumerate}[label={(\roman*)},itemsep=0mm]
\item $\psi$ is a Bernstein function. 
\item $\psi(\lVert \bm{h} \rVert^2)$, $\bm{h} \in \RR^d$, is negative definite 
in the sense of \eqref{eq:NDgamma} for any dimension $d \in \mathbb{N}$.
\item $\psi$ is $n$-alternating of any order $n \in \mathbb{N}$.
\end{enumerate}
\end{lem}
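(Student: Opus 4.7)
The plan is to work through a single L\'evy--Khintchine-type integral representation of Bernstein functions, which simultaneously underpins all three conditions: each of (i), (ii), (iii) will be shown equivalent to the existence of $c_0 \in \RR$, $c_1 \geq 0$ and a Radon measure $\mu$ on $(0, \infty]$ with $\int_{(0,\infty)} \min(1, t) \, \mu(\rd t) < \infty$ such that
\begin{align*}
\psi(s) = c_0 + c_1 s + \int_{(0, \infty]} (1 - e^{-ts}) \, \mu(\rd t), \qquad s \in \RR_+
\end{align*}
(with the convention $e^{-\infty \cdot 0} = 1$, allowing a potential ``nugget'' jump of $\psi$ at $s = 0$, and with $c_0 \in \RR$ accommodating the fact that $\psi$ is only assumed bounded from below rather than non-negative). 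For the equivalence (i)$\Leftrightarrow$(iii), I would simply invoke the classical semigroup theory of negative definite functions on the abelian semigroup $(\RR_+, +)$ developed in \cite{bcr84} (Chapters~4 and~6): in that setting, negative definite and completely alternating functions coincide, and both are characterised precisely by the displayed representation.

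For (i)$\Rightarrow$(ii), I would substitute $s = \lVert \bm{h}\rVert^2$ into this representation and check negative definiteness on $\RR^d$ term-by-term. The constant $c_0$ contributes $c_0 (\sum_i a_i)^2 = 0$ whenever $\sum_i a_i = 0$; the linear term $c_1 \lVert \bm{h}\rVert^2$ recovers the variogram of Brownian motion via the identity $\sum_{i,j} a_i a_j \lVert \bm{x}_i - \bm{x}_j\rVert^2 = -2 \lVert \sum_i a_i \bm{x}_i \rVert^2 \leq 0$; and for each fixed $t > 0$, the Gaussian kernel $e^{-t\lVert \bm{h}\rVert^2}$ is positive definite on $\RR^d$, so $\sum_{i,j} a_i a_j (1 - e^{-t\lVert \bm{x}_i - \bm{x}_j\rVert^2}) = -\sum_{i,j} a_i a_j e^{-t\lVert \bm{x}_i - \bm{x}_j\rVert^2} \leq 0$ when $\sum_i a_i = 0$. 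Fubini then lets me integrate against $\mu$ while preserving the inequality.

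The delicate direction is (ii)$\Rightarrow$(i), and my plan is to pass through Schoenberg's embedding theorems. Replacing $\psi$ by $\psi - \psi(0)$ (which preserves each of the three conditions), I may assume $\psi(0) = 0$. From (ii) and the classical equivalence between negative definite and positive definite functions on abelian groups, $\bm{h} \mapsto e^{-t\psi(\lVert \bm{h}\rVert^2)}$ is positive definite on $\RR^d$ for every $d \in \mathbb{N}$ and every $t > 0$. Being radial and positive definite on every $\RR^d$, Schoenberg's classical theorem on radial positive definite functions then forces $s \mapsto e^{-t\psi(s)}$ to be completely monotone on $\RR_+$ for each $t > 0$. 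The Hausdorff--Bernstein--Widder integral representation of completely monotone functions back-translates into the L\'evy--Khintchine form above for $\psi$, establishing (i). The main obstacle I foresee is carefully invoking Schoenberg's radial theorem without assuming continuity of $\psi$ (since the lemma only requires boundedness from below) and correctly accommodating a possible jump at the origin within the radial embedding.
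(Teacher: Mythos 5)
Your overall architecture is sound and is, in essence, an unpacked version of the very machinery from \cite{bcr84} that the paper invokes wholesale: the paper obtains (i)$\Leftrightarrow$(iii) from Corollary~4.6.8 there (negative definite $=$ completely alternating on the $2$-divisible semigroup $(\RR_+,+)$) and (i)$\Leftrightarrow$(ii) from Corollary~5.1.8 together with the fact that $(\RR,\RR_+,x^2)$ is a Schoenberg triple (Example~5.1.3), whereas you route everything through the L\'evy--Khintchine representation. That representation is indeed correct for negative definite functions on $(\RR_+,+)$ that are bounded from below: the bounded semicharacters of $(\RR_+,+)$ are exactly $s\mapsto e^{-ts}$ with $t\in[0,\infty]$ (a non-negative additive function on $\RR_+$ is monotone, hence linear, so no pathological semicharacters survive the boundedness constraint), and your term-by-term verification of (i)$\Rightarrow$(ii), including the treatment of the constant, the linear part and the Gaussian kernels, is fine.

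The genuine gap is the one you flagged yourself and then left open: in (ii)$\Rightarrow$(i) you appeal to ``Schoenberg's classical theorem on radial positive definite functions'', but that theorem is a statement about \emph{continuous} radial functions, and under the paper's definition $\psi$ is not assumed continuous anywhere --- the whole point of dropping continuity at $0$ is to admit a nugget, i.e.\ an atom $\mu(\{\infty\})>0$, in which case $\bm{h}\mapsto e^{-t\psi(\lVert\bm{h}\rVert^2)}$ has a genuine jump at $\bm{h}=\bm{0}$ and lies outside the scope of the classical theorem (its representation as a mixture of continuous radial characters cannot produce the discontinuous part). What closes the gap is precisely the dimension-free, continuity-free version of Schoenberg's theorem in the semigroup setting: a function $f:\RR_+\to\RR$ such that $f(\lVert\bm{h}\rVert^2)$ is positive definite on $\RR^d$ for every $d$ is completely monotone in the difference-operator sense on $(\RR_+,+)$, hence an integral of the semicharacters $e^{-ts}$, $t\in[0,\infty]$, including the discontinuous one at $t=\infty$; this is exactly the content of $(\RR,\RR_+,x^2)$ being a Schoenberg triple (Example~5.1.3 combined with Corollary~5.1.8 of \cite{bcr84}, the results the paper cites). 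Correspondingly, your Hausdorff--Bernstein--Widder step must be replaced by the representation theorem for completely monotone functions on the semigroup $(\RR_+,+)$ over its bounded semicharacters. With these substitutions your argument goes through; as written, however, the key implication rests on a theorem whose hypotheses are not met, and the equivalence of \eqref{eq:NDpsi} with the alternating/monotone conditions is doing exactly the work you deferred.
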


In particular, the equivalence of (i) and (ii) means that $\psi$ is a Bernstein function if and only if  $\gamma(\bm{h}) = \psi(\lVert \bm{h} \rVert^2)$, $\bm{h} \in \RR^d$, is a valid (not necessarily centred) variogram in any dimension $d \in \mathbb{N}$.  Finally, this allows us to transfer the strategy from Example~\ref{example:fBMconvex} 
to non-convex variograms as follows.

\begin{prop} \label{prop:non-convex}
Let $W_0$ be an intrinsically stationary Gaussian process on $\RR^d$ with 
variogram $\gamma(\bm{h})=\psi(\lVert \bm{h}\rVert^2)$, $\bm{h} \in \RR^d$, for
a Bernstein function $\psi$, and $W_0(\bm{0})=0$ almost surely. Let 
$K=\prod_{i=1}^d[-R_i,R_i]$ be a $d$-dimensional hyperrectangle, whose vertices
$\bm{v}_A$ are labelled by subsets $A \subset \{1,\dots,d\}$ as in 
\eqref{eq:vertexlabel}. Then the maximal variance of the modified process
 $W$ in \eqref{eq:fBMconvex}
is at least as small as the maximal variance of the original process $W_0$ on 
the domain $K$, i.e.
\begin{align*}
\sup_{\bm{x} \in K} \var(W(\bm{x}))
\leq \sup_{\bm{x} \in K} \var(W_0(\bm{x})).
\end{align*}
\end{prop}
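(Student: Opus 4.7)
The plan is to exploit the integral representation of Bernstein functions from Lemma~\ref{lemma:Bernstein} together with the linearity of the Matheron covariance formula \eqref{eq:covmin} in the variogram $\gamma$. Since $\gamma(\bm 0)=0$ forces $\psi(0)=0$, I would use a L\'evy--Khintchine type representation
\[
\psi(t) \;=\; a\,\mathbf{1}_{t>0} \,+\, b\,t \,+\, \int_{(0,\infty)}(1-e^{-st})\,\mu(\rd s),
\]
with $a,b\ge 0$ and a L\'evy measure $\mu$ on $(0,\infty)$. The variogram $\gamma(\bm h)=\psi(\|\bm h\|^2)$ then decomposes as a mixture of a nugget variogram, the Brownian variogram $b\|\bm h\|^2$ and the Gaussian-kernel variograms $\gamma_s(\bm h)=1-e^{-s\|\bm h\|^2}$, $s>0$. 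Since \eqref{eq:covmin} applied with $\lambda_{\min}$ uniform on the vertices of $K$ as in \eqref{eq:fBMconvex} is linear in $\gamma$, both $\var(W(\bm x))$ and $\var(W_0(\bm x))=\gamma(\bm x)$ decompose linearly; subadditivity of the supremum reduces the statement to proving the analogous inequality for each of these three atomic variograms separately.

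The nugget case is a direct count yielding $\sup_K\var(W)=\tfrac{a}{2}(1+2^{-d})\le a=\sup_K\var(W_0)$, and the Brownian case is an instance of Proposition~\ref{prop:convex} that even gives the equality $\var(W(\bm x))=b\|\bm x\|^2$. The real work is the Gaussian-kernel atom. Its key feature is the coordinate-wise factorisation of $e^{-s\|\bm h\|^2}$, which collapses \eqref{eq:covmin} to
\[
\var(W(\bm x)) \;=\; \tfrac{1}{2} \,+\, \tfrac{1}{2}\prod_{i=1}^{d}q_i \,-\, \prod_{i=1}^{d}r_i(x_i),
\]
with $r_i(x_i):=\tfrac{1}{2}[e^{-s(x_i-R_i)^2}+e^{-s(x_i+R_i)^2}]$, $p_i:=r_i(0)=e^{-sR_i^2}$ and $q_i:=r_i(R_i)=\tfrac{1}{2}(1+e^{-4sR_i^2})$. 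A short analysis of $r_i''(0)=2se^{-sR_i^2}(2sR_i^2-1)$ and of the interior critical-point equation $\tanh(\xi)/\xi=1/(2sR_i^2)$ shows that $r_i$ attains its minimum on $[-R_i,R_i]$ at $0$ or at $\pm R_i$, so that $m_i:=\min_{x_i\in[-R_i,R_i]}r_i(x_i)=\min(p_i,q_i)$. Using $\sup_K\gamma_s=1-\prod_i p_i$, the Gaussian-kernel claim reduces to the product inequality
\[
\prod_{i=1}^{d} m_i \;+\; \tfrac{1}{2}\Bigl(1-\prod_{i=1}^{d} q_i\Bigr) \;\ge\; \prod_{i=1}^{d}p_i.
\]

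This last inequality is the crux and I would prove it by induction on $d$. The base case $d=1$ reduces, after the trivial sub-case $m_1=p_1$, to $q_1+1\ge 2p_1$, which is the polynomial identity $p^4-4p+3=(p-1)^2(p^2+2p+3)\ge 0$. For the inductive step, let $M,P,Q$ denote the products over the first $d-1$ coordinates; multiplying the inductive hypothesis $2M\ge 2P-1+Q$ by $m_d\ge 0$ and splitting according to whether $m_d=p_d$ or $m_d=q_d$ leaves a single one-coordinate residual inequality, namely $q_d\le 1$ in the former sub-case and the base-case inequality $q_d\ge 2p_d-1$ in the latter. Keeping the direction of these residual one-dimensional inequalities correct through the sub-case split of the induction, so that no slack is lost when pushing through to products, is the delicate step that I expect to require the most care.
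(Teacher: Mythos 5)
Your argument is correct, but it is a genuinely different proof from the one in the paper. The paper never decomposes $\psi$: it works directly with the $n$-alternating property of Bernstein functions for $n\le 3$ (its Lemma~\ref{lemma:Bernstein}(iii)), first using $2$-alternation to push a general point $\bm{x}$ to the vertex configuration (Lemma~\ref{lemma:TWOalternating}) and then a $3$-fold difference identity (Lemma~\ref{lemma:THREEalternating}) to absorb the $-\tfrac12\psi(4\sum_{i\in A}R_i^2)$ terms. You instead invoke the L\'evy--Khintchine representation $\psi(t)=a\mathbf{1}_{t>0}+bt+\int(1-e^{-st})\,\mu(\rd s)$, exploit linearity of the variance functional $\gamma\mapsto \frac{1}{2^d}\sum_A\gamma(\bm{x}-\bm{v}_A)-\frac{1}{2^{d+1}}\sum_A\gamma(\bm{v}_\emptyset-\bm{v}_A)$ in $\gamma$, and reduce to three atoms; the Gaussian-kernel atom then factorises over coordinates and your product inequality $\prod_i m_i+\tfrac12(1-\prod_i q_i)\ge\prod_i p_i$ with $m_i=\min(p_i,q_i)$, $q_i=\tfrac12(1+p_i^4)$, goes through by the induction you sketch (I checked both sub-cases: they additionally use $P\le 1$ resp.\ $Q\le 1$, plus $(p-1)^2(p^2+2p+3)\ge 0$ in the base case, exactly as you indicate). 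Two points worth making explicit in a write-up: (i) the "subadditivity of the supremum" step needs the companion observation that the atomic suprema $\sup_K\var(W_0^{(\text{atom})})$ are all attained at a common vertex (true, since every atom is a nondecreasing function of $\lVert\bm{x}\rVert$), so that they sum to $\sup_K\var(W_0)$ and no slack is lost on the right-hand side; (ii) the integral representation requires the full Bernstein (completely alternating) hypothesis plus a mild regularity caveat on the additive part, whereas the paper's route only uses $n$-alternation up to $n=3$ — as the paper remarks, its proof therefore covers a strictly larger class of $\psi$. What your route buys in exchange is an explicit closed form for $\var(W(\bm{x}))$ on each atom, which is sharper information than the bare inequality and makes the one-dimensional structure of the problem transparent.
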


\begin{ex} \label{example:fBMnonconvex}
Proposition~\ref{prop:non-convex} applies to the situation of 
Example~\ref{example:fBMconvex} with $\alpha \in [1,2)$ replaced by 
\emph{arbitrary} $\alpha \in (0,2)$ and $W_0(\bm{0})=0$ almost surely. In
particular, the maximal variance on the hyperrectangle $K$ can also be reduced
for $\alpha \in (0,1)$ by the same simple trick of subtracting the vertices of
$K$ with equal weights that we applied already for $\alpha \in [1,2)$ in 
Example~\ref{example:fBMconvex}. Summarizing, we see that this trick 
\emph{reduces} the maximal variance on $K$ for \emph{any} fractional Brownian 
sheet. For $\alpha \in [1,2)$ it even \emph{minimizes} this maximal variance, 
cf.~Example~\ref{example:fBMconvex}.
\end{ex}

\begin{rem}
\begin{enumerate}[label={(\alph*)}, wide=0mm, itemsep=0mm]
\item In fact, the proof of Proposition~\ref{prop:non-convex} only requires
$n$-alternation of $\psi$ for $n \leq 3$ and not necessarily that $\psi$ is a
Bernstein function.
\item In Examples~\ref{example:fBMconvex}, \ref{example:fBMnonconvex} and 
Proposition~\ref{prop:non-convex} the variogram $\gamma(\bm{h})$ can be replaced
by $\tilde \gamma(\bm{h}) = \gamma(\bm{M} \bm{h})$, where $\bm{M}$ is any 
invertible symmetric $d\times d$ matrix. This is possible, since such a 
transformation of $\RR^d$ maps hyperrectangles that are centered at the origin 
again into hyperrectangles that are centered at the origin, whence 
Propositions~\ref{prop:convex} and \ref{prop:non-convex} can be applied to 
the transformed processes $W_0(\bm{M}^{-1} \bm{x})$, $W(\bm{M}^{-1} \bm{x})$, 
$\bm{x} \in \RR^d$, and their variogram 
$\tilde \gamma(\bm{M}^{-1} \bm{h}) = \gamma(\bm{h})$, $\bm{h} \in \RR^d$.
\end{enumerate}
\end{rem}

\subsection{Minimal $K$-stationary representations} 

There are some situations, in which we can even compare the improvements of the
variance reduction of Proposition~\ref{prop:non-convex} to the true minimal 
maximal variance on the simulation domain $K$. This is the case when the 
minimizing measure $\lambda_{\min}$ leads to a \emph{$K$-stationary} solution, 
that is, when $C_{\min}(\bm{x},\bm{y})$ in \eqref{eq:covmin} only depends on 
$\bm{x}-\bm{y}$ for $\bm{x}, \bm{y} \in K$. In general, it is not clear that a
given variogram $\gamma(\bm{x}-\bm{y})$, $\bm{x},\bm{y} \in K$ possesses a 
$K$-stationary representation at all, cf.\ Remark~3.2.5 in \cite{bcr84} for a 
counterexample. For fractional Brownian sheets on a $d$-dimensional Euclidean ball 
$B_R(\bm{0})=\{\bm{h} \in \RR^d \,:\, \lVert \bm{h} \rVert \leq R \}$ however, the following proposition restating \cite{gneiting2000addendum} and 
\cite{matheron74} confirms their existence and makes their covariance explicit. 

\begin{prop}\label{prop:locstat}
\begin{enumerate}[label={(\alph*)},wide=0mm]
\item{}[\cite{gneiting2000addendum}] For $\alpha \in (0,2)$, $s>0$ and $R>0$ 
the function
\begin{align*}
C(\bm{x},\bm{y}) = a - \frac{1}{2}\big\lVert (\bm{x} - \bm{y})/s \big\rVert^\alpha, \quad \bm{x},\bm{y} \in B_R(\bm{0})
\end{align*}
is a covariance function if and only if $a \geq (A_{\alpha,d}/2)(R/s)^\alpha$, 
where
\begin{align*}
A_{\alpha,d} = {\Gamma\bigg(\frac{2-\alpha}{2}\bigg)\Gamma\bigg(\frac{d+\alpha}{2}\bigg)}{\Gamma\bigg(\frac{d}{2}\bigg)^{-1}}. 
\end{align*}
\item{}[\cite{matheron74}] Choosing $a=(A_{\alpha,d}/2) (R/s)^\alpha$ minimizes
the maximal variance 
\begin{align*} \sup_{\bm{x} \in B_R(\bm{0})} C(\bm{x},\bm{x}) = \sup_{\bm{x} \in B_R(\bm{0})} \var(W(\bm{x}))
\end{align*} 
among all intrinsically stationary Gaussian representations $W$ on the domain $B_R(\bm{0})$ 
for the variogram $\gamma(\bm{h})=\lVert\bm{h}/s\rVert^\alpha$ if and only if
$d=1$ and $\alpha \in (0,1]$.
\end{enumerate}
\end{prop}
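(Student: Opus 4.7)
Both items restate classical results of \cite{gneiting2000addendum} for part~(a) and \cite{matheron74} for part~(b); my plan is to reassemble their proofs in the present notation. For part~(a), \emph{sufficiency} of $a\ge a_c:=(A_{\alpha,d}/2)(R/s)^\alpha$ follows from a turning-bands construction: with $c_{\alpha,d}=\Gamma(d/2)\Gamma((\alpha+1)/2)/[\sqrt{\pi}\,\Gamma((d+\alpha)/2)]$, the identity $\lVert\bm{h}/s\rVert^\alpha=c_{\alpha,d}^{-1}\int_{S^{d-1}}|\bm{e}\cdot\bm{h}/s|^\alpha\,\sigma(\rd\bm{e})$ allows one to build the $d$-dimensional stationary Gaussian process on $B_R(\bm{0})$ as a Gaussian superposition of 1D stationary processes on $[-R,R]$ along random directions $\bm{e}\sim\sigma$. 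Since the projection of $B_R(\bm{0})$ onto any axis is exactly $[-R,R]$ and $A_{\alpha,1}/c_{\alpha,d}=A_{\alpha,d}$, sufficiency reduces to the 1D case with the variance $a_c$ carrying over to $d$~dimensions. For \emph{necessity}, any such stationary $W$ must satisfy
\begin{align*}
0\le\var\Bigl(\sum_i c_iW(\bm{x}_i)\Bigr)=a\Bigl(\sum_i c_i\Bigr)^2-\tfrac{1}{2}\sum_{i,j}c_ic_j\lVert(\bm{x}_i-\bm{x}_j)/s\rVert^\alpha
\end{align*}
for all $c_i\in\RR$ and $\bm{x}_i\in B_R(\bm{0})$; plugging in the classical $\alpha$-capacitary configuration on $B_R(\bm{0})$, whose energy at unit total mass equals $A_{\alpha,d}(R/s)^\alpha$, forces $a\ge a_c$.

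For part~(b), the inequality ``Matheron min $\le$ $K$-stationary min'' is automatic from the subset relation of representation classes. \emph{If direction} ($d=1,\alpha\in(0,1]$): for $\alpha=1$, Example~\ref{example:fBMconvex} identifies $\lambda_{\min}=\tfrac{1}{2}(\delta_{-R}+\delta_R)$ and the potential $\int|x-z|\,\lambda_{\min}(\rd z)=R$ is constant on $[-R,R]$, so \eqref{eq:covmin} is translation-invariant with variance $R/(2s)=a_c$, forcing equality. For $\alpha\in(0,1)$, the same dual inequality applied to a generic Matheron representation (with $V(\bm{x})=\var(W(\bm{x}))$ possibly non-constant), combined with the 1D coincidence of the energy supremum at unit total mass over probability and signed configurations (both equal to $A_{\alpha,1}R^\alpha/s^\alpha$ when $\alpha\le 1$), forces $V_{\max}\ge a_c$ and hence equality. \emph{Only if direction}: for $d=1$, $\alpha\in(1,2)$, Example~\ref{example:fBMconvex} still yields $\lambda_{\min}=\tfrac{1}{2}(\delta_{-R}+\delta_R)$ but the potential $\tfrac{1}{2}((R-x)^\alpha+(R+x)^\alpha)$ is non-constant and the Matheron maximum variance is $(2R/s)^\alpha/4$, strictly below $a_c$ since $2^{\alpha-1}<A_{\alpha,1}$ for $\alpha>1$; for $d\ge 2$ with $\alpha\in[1,2)$, Proposition~\ref{prop:convex} identifies $\lambda_{\min}$ as uniform on $S^{d-1}(R)$ and the non-constant spherical potential on $B_R(\bm{0})$ again gives a Matheron maximum variance strictly below $a_c$; the case $\alpha\in(0,1)$, $d\ge 2$ is handled analogously by noting that the sphere-uniform choice in \eqref{eq:covmin}, even if not Matheron extremal, already produces a representation whose maximum variance is strictly below $a_c$.

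The delicate step is the \emph{if} direction for $\alpha\in(0,1)$ in 1D, where the Matheron extremal measure is no longer atomic. The argument hinges on the potential-theoretic coincidence, special to one-dimensional domains with concave or linear variograms, of the energy supremum over probability measures with that over unit-total-mass signed configurations on $[-R,R]$. This coincidence fails both for $\alpha>1$ in 1D --- where signed configurations strictly outperform probability measures and vertex-atomic Matheron constructions already beat the $K$-stationary bound --- and for $d\ge 2$, where the spherical potential is no longer constant on the ball, so the simplest Matheron construction furnishes a non-stationary representation with lower maximum variance than any $K$-stationary one.
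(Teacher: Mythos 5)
The paper itself offers no proof of this proposition: it is presented purely as a restatement of \cite{gneiting2000addendum} and \cite{matheron74}, and Appendix~\ref{sec:proofs} contains no argument for it, so there is no in-paper proof to compare yours against. Judged on its own terms, your reconstruction has the standard and correct architecture: turning bands reduce sufficiency in (a) to dimension one (and your constant check $A_{\alpha,1}/c_{\alpha,d}=A_{\alpha,d}$ is right); positive semidefiniteness tested on unit-total-mass configurations gives necessity; and for (b) the key dichotomy is that the minimal maximal variance over \emph{all} representations equals $\tfrac12\sup_\mu\int\int\gamma(\bm{x}-\bm{y})\,\mu(\rd\bm{x})\mu(\rd\bm{y})$ over \emph{probability} measures, while $a_c$ is the analogous supremum over \emph{signed} unit-mass configurations, so the $K$-stationary choice is globally minimal exactly when the constant-potential (equilibrium) configuration is nonnegative. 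Your $\alpha=1$, $d=1$ computation and your $d=1$, $\alpha\in(1,2)$ counterexample (maximal variance $2^{\alpha-2}(R/s)^\alpha<a_c$, where strictness follows from $A_{\alpha,1}(R/s)^\alpha\geq \tfrac12(2R/s)^\alpha$ together with uniqueness of the constant-potential maximizer) are correct.

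As a self-contained proof, however, the argument is circular at the decisive points: the ``classical'' facts you invoke --- the one-dimensional sufficiency threshold, the existence and energy value $A_{\alpha,d}(R/s)^\alpha$ of the constant-potential configuration on the ball, and above all the claim that the signed-measure energy supremum is attained by a probability measure precisely when $d=1$ and $\alpha\le1$ --- \emph{are} the content of the two cited theorems rather than lemmas you may assume. Moreover, one case of the ``only if'' direction is genuinely under-argued: for $d\ge2$ and $\alpha\in(0,1)$, Proposition~\ref{prop:convex} does not apply (the variogram is not convex as a function on $\RR^d$), and to conclude that the sphere-uniform modification beats $a_c$ you must still show that the radial potential $\bm{x}\mapsto\int_{{\normalfont \Ex}(B_R(\bm{0}))}\lVert\bm{x}-\bm{y}\rVert^\alpha\,\sigma(\rd\bm{y})$ attains its maximum over $B_R(\bm{0})$ on the sphere, so that the resulting maximal variance equals half the energy of $\sigma$, which is strictly below $\tfrac12 A_{\alpha,d}(R/s)^\alpha$ by uniqueness of the maximizer. ``Handled analogously'' skips exactly the monotonicity verification that distinguishes this case from $\alpha\ge1$. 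If your intention is to cite the two references, say so explicitly; if it is to reprove them, these are the steps that still require arguments.
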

Generally, the choice in (b) only minimizes the maximal variance among the 
$K$-\emph{stationary} Gaussian representations of the variogram  
$\gamma(\bm{h})=\lVert\bm{h}/s\rVert^\alpha$ on the domain $K=B_R(\bm{0})$, not 
necessarily among \emph{all} intrinsically stationary Gaussian representations
of $\gamma$ on $B_R(\bm{0})$.

\begin{figure}
\centering
\begin{minipage}{12cm}
\centering
{\small 
\tabcolsep0mm
\begin{tabular}{p{1mm}ccc}
&
\begin{minipage}{4cm}\centering $\bm{\alpha = 0.7}$\end{minipage}&
\begin{minipage}{4cm}\centering $\bm{\alpha = 1.0}$\end{minipage}&
\begin{minipage}{4cm}\centering $\bm{\alpha = 1.3}$\end{minipage}
\end{tabular}
}
\includegraphics[width=12cm]{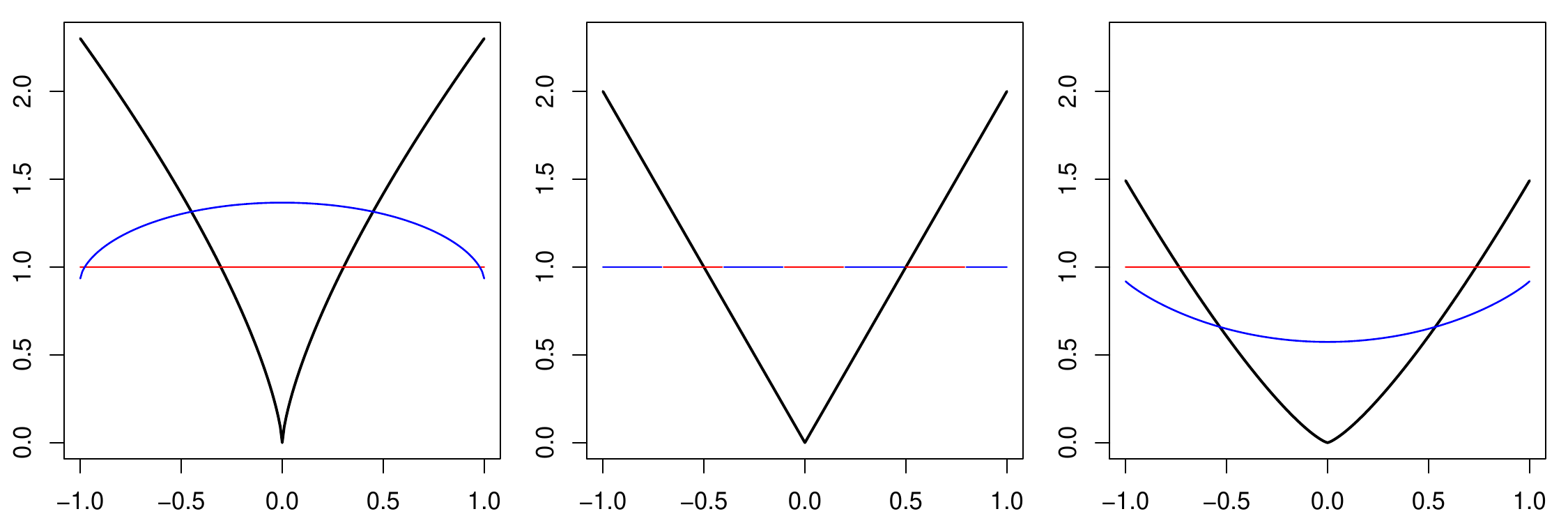}
\end{minipage}
\caption{\small Variances $\sigma^2(t)$ of the Gaussian representations of the 
   variogram $\gamma({h})=\lvert h/s\rvert^\alpha$ on the domain $K=[-1,1]$. 
   The plots show the variance for the original representation with $W_0(0)=0$
   (black), the minimal $K$-stationary representation (red) and the 
   $\lambda$-modified representation with $\lambda=\text{Unif}(\Ex(K))$ (blue).
   For $\alpha=1$ the last two coincide. The scale $s>0$ is chosen such that 
   the variance of the minimal $K$-stationary representation (red) is
   normalized to 1.}
\label{fig:Dim1variances}
\vspace{5mm}
\mbox{
\begin{minipage}{1cm}
\rotatebox{90}{\small 
\tabcolsep0mm
\begin{tabular}{ccc}
\begin{minipage}{4.5cm}\centering $\bm{\lambda}$\textbf{-modified} \end{minipage}&
\begin{minipage}{4.5cm}\centering $K$\textbf{-stationary}\end{minipage}&
\begin{minipage}{4.5cm}\centering \textbf{original} \end{minipage}
\end{tabular}
}
\end{minipage}\hspace*{-9mm}
\begin{minipage}{14cm}
\centering
{\small 
\tabcolsep0mm
\begin{tabular}{ccc}
\begin{minipage}{4.5cm}\centering $\bm{\alpha = 0.7}$\end{minipage}&
\begin{minipage}{4.5cm}\centering $\bm{\alpha = 1.0}$\end{minipage}&
\begin{minipage}{4.5cm}\centering $\bm{\alpha = 1.3}$\end{minipage}
\end{tabular}
}
\includegraphics[width=4.5cm]{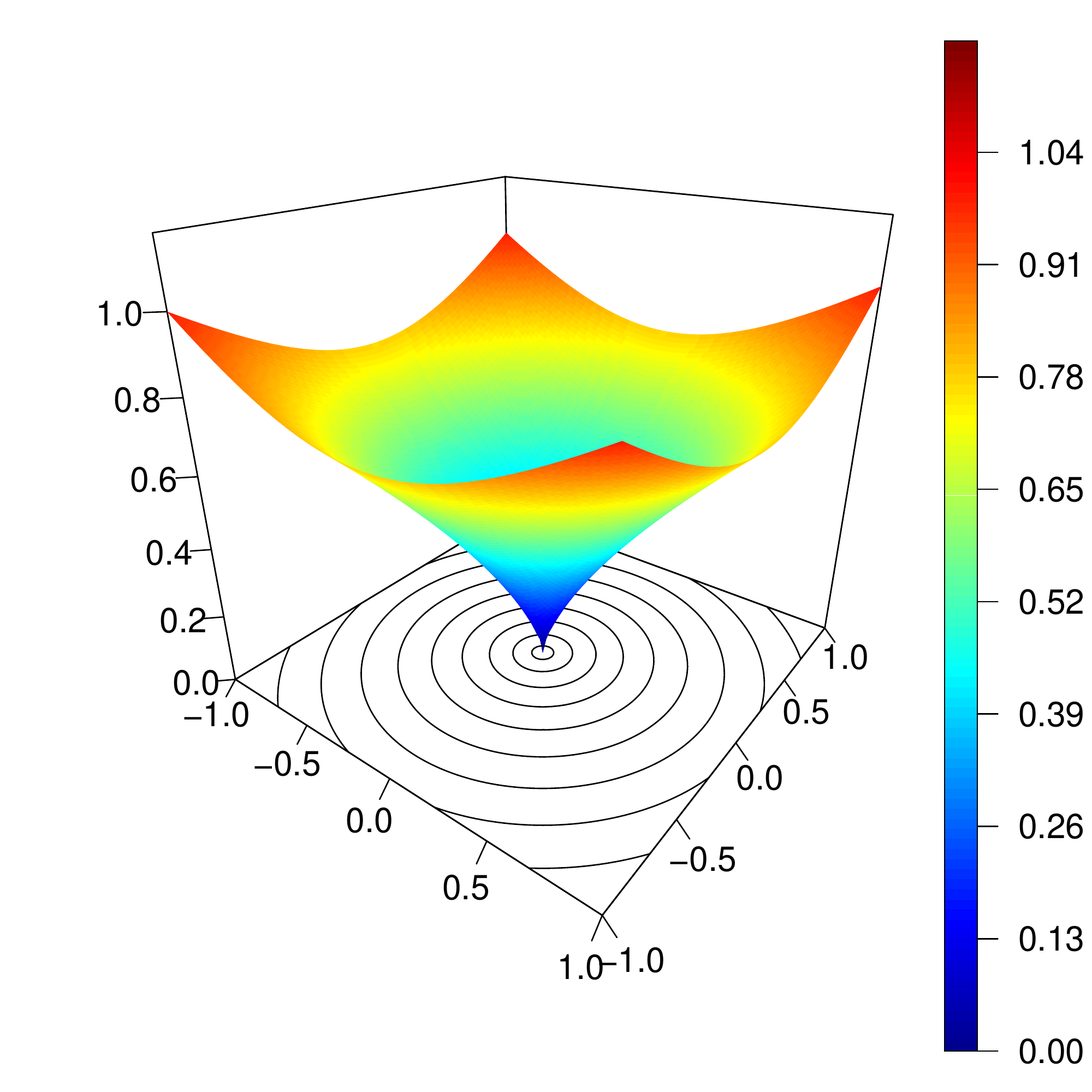}
\includegraphics[width=4.5cm]{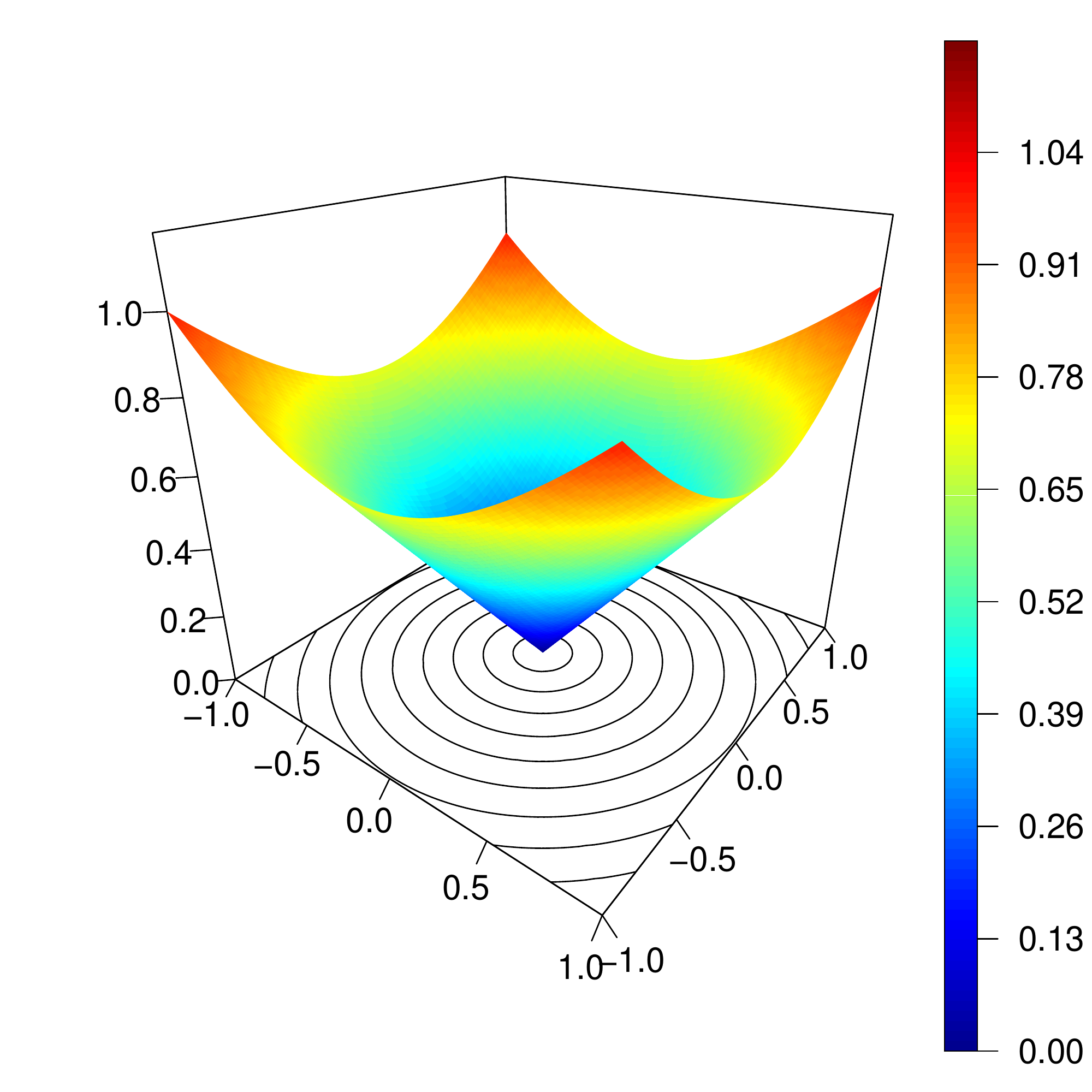}
\includegraphics[width=4.5cm]{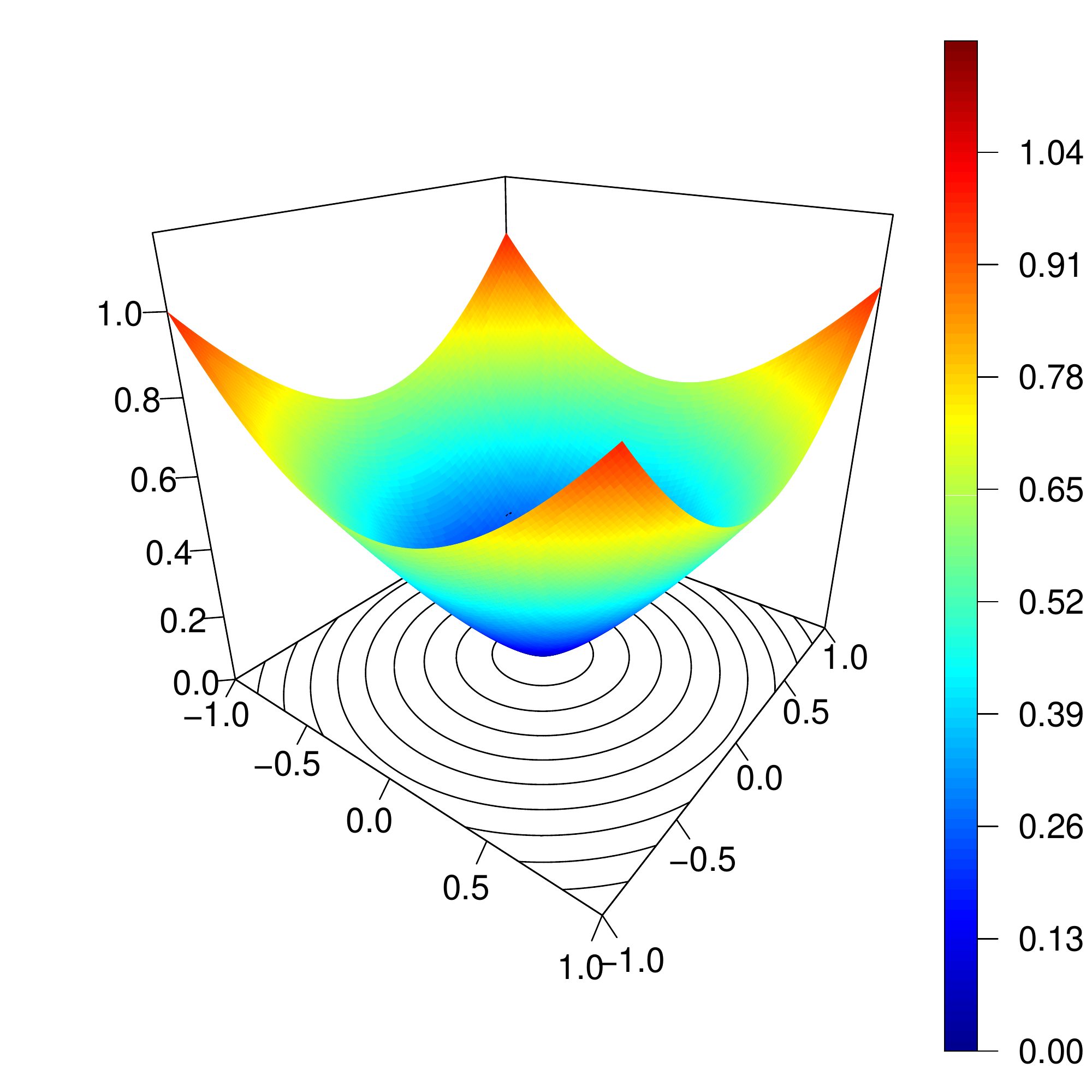}\\
\includegraphics[width=4.5cm]{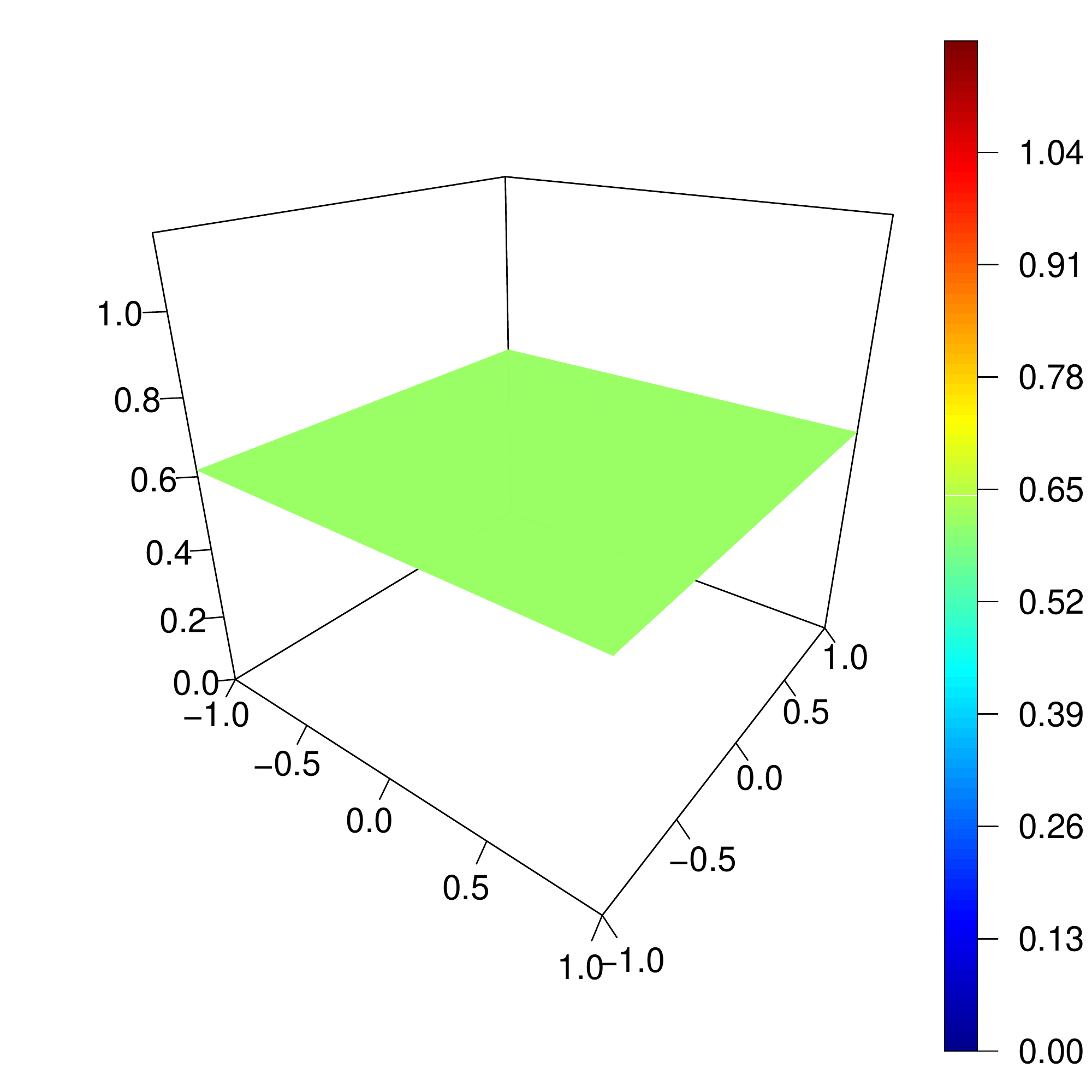}
\includegraphics[width=4.5cm]{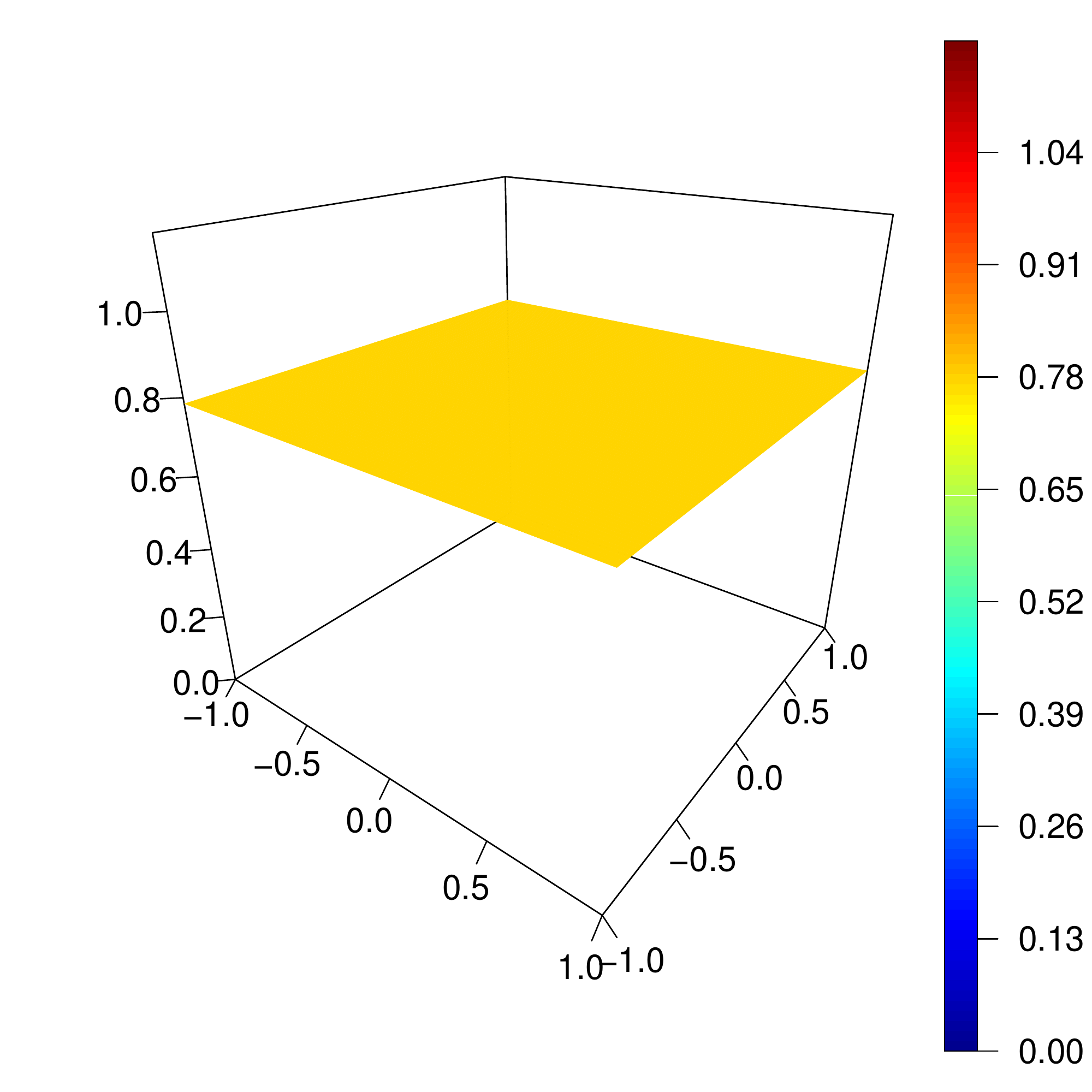}
\includegraphics[width=4.5cm]{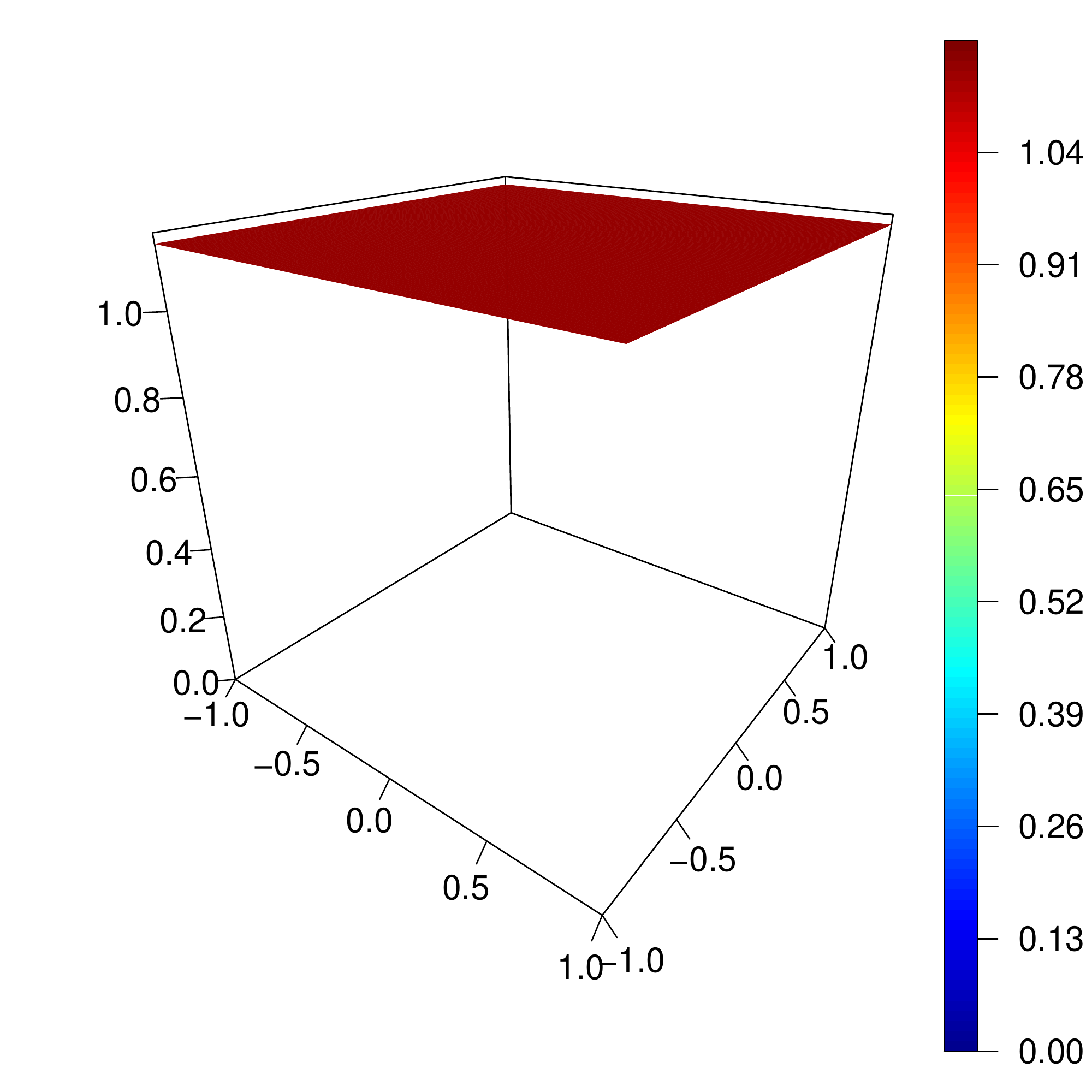}\\
\includegraphics[width=4.5cm]{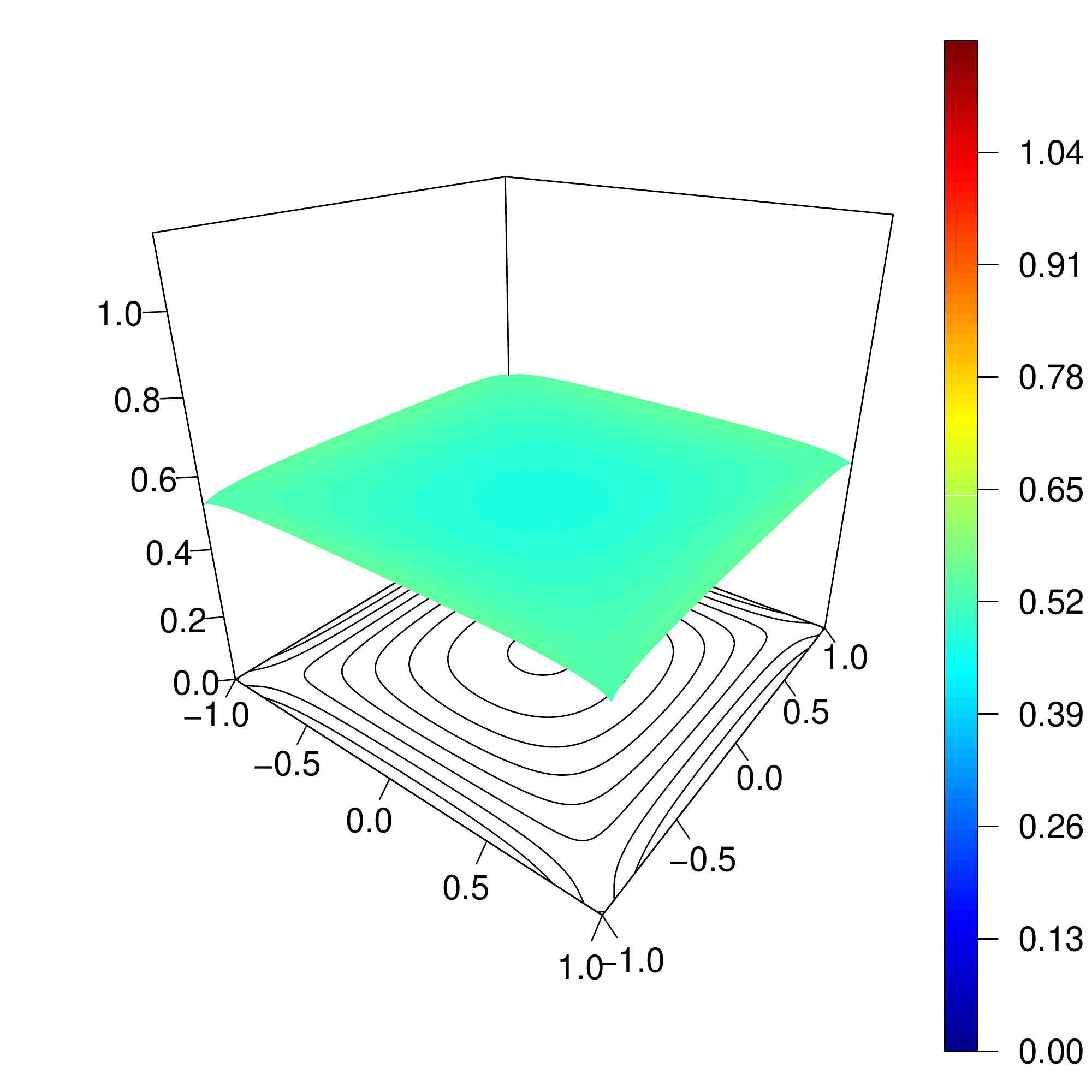}
\includegraphics[width=4.5cm]{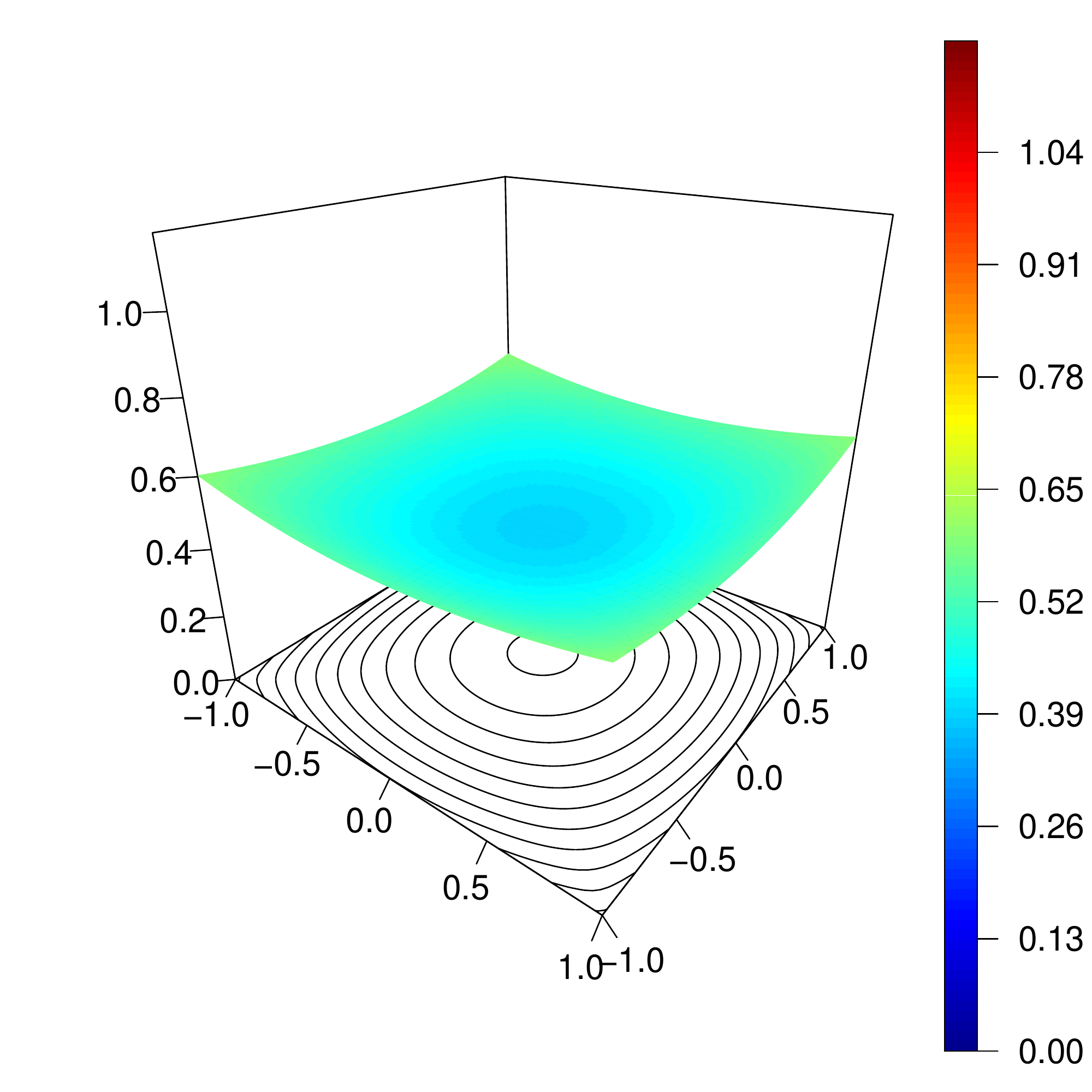}
\includegraphics[width=4.5cm]{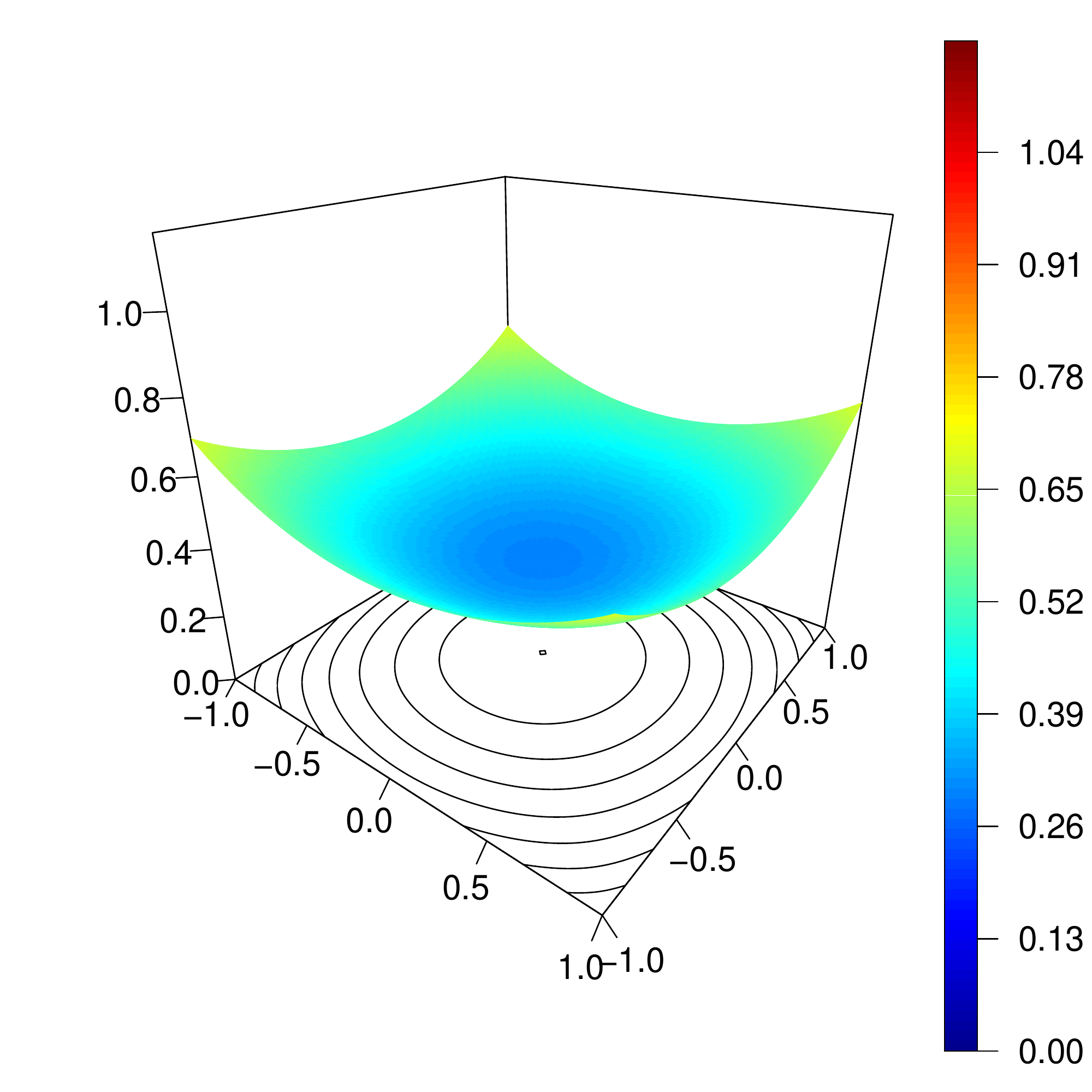}
\end{minipage}
}
\caption{\small Variances $\sigma^2(t)$ of the Gaussian representations of the 
         variogram $\gamma(\bm{h})=\lVert \bm{h}/\sqrt{2}\rVert^{\alpha}$ on 
         the domain $K=[-1,1]^2$ for $\alpha\in\{0.7, 1.0, 1.3\}$ (left to 
         right). The plots show the variance for the original representation
         with $W_0(\bm{0})=0$, the minimal $K$-stationary representation
         and the $\lambda$-modified representation with 
         $\lambda=\text{Unif}(\Ex(K))$ (top to bottom). Minimality of the 
         $K$-stationary representation refers to the minimal ball 
         $B_{\sqrt{2}}(\bm{0})$ containing $K$.}
\label{fig:Dim2variances_a071013_overview_orig_locstat_modified}
\end{figure}

\begin{ex} \label{example:fBMgeneral}
Let $\gamma({h})=\lvert {h}/s\rvert^\alpha$, $\alpha \in (0,2)$, $s>0$, be the 
family of fractional Brownian motion variograms in dimension $1$. Collectively,
the results from Examples~\ref{example:fBMconvex} and \ref{example:fBMnonconvex},
Proposition~\ref{prop:locstat} (b) provide a full description of the 
intrinsically stationary Gaussian representations $W_{\min}$ of $\gamma$ on the
domain $[-R,R] \subset \RR$ that minimize the maximal variance therein. 
\begin{itemize}[itemsep=0mm,leftmargin=\parindent]
\item For $\alpha \in [1,2)$ it is given by the minimizing measure
 $\lambda_{\min}=\frac{1}{2}(\delta_{-R}+\delta_R)$ as described in 
 Proposition~\ref{prop:convex} and Example~\ref{example:fBMconvex}. If $W_0$ is
 any intrinsically stationary Gaussian representation of $\gamma$, it can be 
 obtained as the \emph{$\lambda_{\min}$-modified process}
\begin{align} \label{eq:Wmin1}
W_{\min}(x) = W_0(x) - \frac{1}{2} (W_0(-R)+W_0(R)), \quad x \in [-R,R].
\end{align} 
\item For $\alpha \in (0,1]$ Proposition~\ref{prop:locstat} (b) tells us that
the covariance function of $W_{\min}$ is
\begin{align} \label{eq:Cmin1}
C_{\min}(x,y)=\frac{A_{\alpha,1} - \lvert x-y \lvert^\alpha}{2s^\alpha}, \quad x,y \in [-R,R],
\end{align}
which is the minimal $K$-\emph{stationary} representation of $\gamma$ on
$K=B_R(0)=[-R,R]$.
\end{itemize}
This description was already obtained by \cite{matheron74}. In particular, the
case $\alpha=1$ in dimension $1$ is very special in the sense that (i) 
$d$-dimensional balls and $d$-dimensional hyperrectangles are the same and (ii)
the functions $h \mapsto \lvert h/s \rvert^\alpha$, $s>0$, are both
concave and convex on $[0,\infty)$ for $\alpha=1$. This leads to the situation that the 
minimizing representation can be obtained either way. The covariance of 
\eqref{eq:Wmin1} is precisely \eqref{eq:Cmin1} for $\alpha=1$.

As an illustration of the different cases, Figure~\ref{fig:Dim1variances} 
shows the variances of the original representation (with $W_0(0)=0$), the
$\lambda$-modified representation (with $\lambda=\frac{1}{2}(\delta_{-1}+\delta_1)$)
and the minimal $K$-stationary representation of Gaussian random fields
on $K=[-1,1]$ with variogram $\gamma(h)=\lvert h/s\rvert^\alpha$ for
$\alpha \in \{0.7,1,1.3\}$.
\end{ex}

\begin{ex} \label{example:fBSgeneral}
Let $\gamma(\bm{h})=\lVert \bm{h}/s\rVert^\alpha$, $\alpha \in (0,2)$, $s>0$, be
the family of variograms of fractional Brownian sheets in dimension $d \geq 2$ 
and consider the (hyper-)rectangular simulation domain $K= \prod_{i=1}^d [-R_i,R_i]$. 
Again we are interested in the intrinsically stationary Gaussian representation
$W_{\min}$ of $\gamma$ on the domain $K \subset \RR^d$ that minimize the maximal 
variance therein.
\begin{itemize}[itemsep=0mm,leftmargin=\parindent]
\item When $\alpha \in [1,2)$, still Proposition~\ref{prop:convex} and 
Example~\ref{example:fBMconvex} provide the  minimizing measure 
\begin{align*}
\lambda_{\min} = \frac{1}{2^d} \sum_{A \subset \{1,\dots,d\}} \delta_{\bm{v}_A},
\quad \bm{v}_A=(\sigma^A_i R_i)_{i=1}^d, \quad \sigma^A_i=\begin{cases}+1 & i \in A \\ -1 & i \not\in A\end{cases},
\end{align*}
and $W_{\min}$ can be obtained as the $\lambda$-\emph{modified} process from
\eqref{eq:fBMconvex}.
\item However, for $\alpha \in (0,1)$, we do not know the minimizing measure $\lambda_{\min}$
or the corresponding covariance $C_{\min}$, not even if we replace the domain $K$ by
the $d$-dimensional ball $B_R(\bm{0})$.  At least, Proposition~\ref{prop:locstat}
identifies on $\widetilde K=B_R(\bm{0})$ the minimal $\widetilde K$-\emph{stationary} 
Gaussian representations of the variogram $\gamma$ for $\alpha \in (0,2)$. 
For $R=(R_1^2+\dots+R_d^2)^{1/2}$ the domain $\widetilde K = B_R(\bm{0})$ is the
smallest ball that contains $K$. Therefore, the covariance
\begin{align}\label{eq:locstatK}
C(\bm{x},\bm{y}) = \frac{A_{\alpha,d}R^\alpha - \big\lVert (\bm{x} - \bm{y}) \big\rVert^\alpha}{2s^\alpha} , \quad \bm{x},\bm{y} \in K,
\end{align}
also describes a $K$-stationary Gaussian representation of $\gamma$, but we
do not know if it is minimal among $K$-stationary Gaussian representations of
$\gamma$ on $K$. However, Figure~\ref{fig:Dim2variances_a071013_overview_orig_locstat_modified} 
illustrates that it is certainly not minimal among general Gaussian 
representations of $\gamma$. In each situation, the $\lambda$-modified 
processes \eqref{eq:fBMconvex} has an even smaller maximal variance on $K$ than
the $K$-stationary process derived from \eqref{eq:locstatK}. While this is not
a surprise for $\alpha=1$ or $\alpha=1.3$, where we know that \eqref{eq:fBMconvex}
is minimal, it seems quite remarkable how much the maximal variance of \eqref{eq:fBMconvex}
is reduced even in the case $\alpha=0.7$ compared to the $K$-stationary process from 
\eqref{eq:locstatK}. This is a major difference to the 1-dimensional case that we 
considered in Example~\ref{example:fBMgeneral}, where the $K$-stationary process 
provides the minimal representation.
\end{itemize}
\end{ex}

\begin{rem}
In the literature, there are different other terms related to the concept
of $K$-\emph{stationarity}. In \cite{analogies01}, for instance, a 
\emph{locally equivalent stationary covariance} is defined as covariance
function $C$ on $\RR^d \times \RR^d$ such that $C(\bm{x},\bm{y})$ only depends
on $\bm{x} - \bm{y}$ for \emph{all} $\bm{x}, \bm{y} \in \RR^d$ and  
\begin{equation} \label{eq:loc-equiv-stat-cov}
 C(\bm{x},\bm{y}) = C(\bm{0}) - \frac{1}{2}\gamma(\bm{x}-\bm{y}), \quad \bm{x}, \bm{y} \in K.
\end{equation}
In general, this assumption is stronger than the one in our definition, as it 
additionally requires that the locally defined covariance function $C$ on 
$K \times K$ can be extended to a stationary covariance function on 
$\RR^d \times \RR^d$. There are numerous examples of variograms $\gamma$ and 
corresponding functions of the form \eqref{eq:loc-equiv-stat-cov} that are 
positive definite on a small (potentially discrete and finite) domain $K$ for
sufficiently large $C(\bm{0})$, but cannot be extended to a positive definite
function on $\RR^d \times \RR^d$. In case of an isotropic variogram and a 
domain of the type $K = B_R(\bm{0})$ for some $R>0$, i.e.\ in the case 
considered in Proposition~\ref{prop:locstat}, however, such an extension is
always possible by Rudin's Theorem \citep{rudin70}. Thus, the terms 
\emph{locally equivalent stationary covariance} and \emph{covariance of a} 
$K$-\emph{stationary representation} are equivalent in case $K = B_R(\bm{0})$.
\end{rem}

\subsection{Discretization effects}
\label{sec:discrete}

In our examples above, we always considered processes on \emph{convex} 
$d$-dimensional domains $K$ with \emph{non-trivial interior} (such as 
$d$-dimensional hyperrectangles or $d$-dimensional balls of some radius). Indeed
the regions of interest are usually of such kind. 
On the other hand, the simulation domain $K$ in a computer experiment is 
necessarily a finite set of points $K=\{\bm{x}_i\}_{i=1,\dots,N}$ (such as a 
subgrid of a hyperrectangle) and the minimizing measure $\lambda_{\min}$ can
depend on this choice of discretization. In case that the variogram $\gamma$ is
convex, not much changes. Proposition~\ref{prop:convex} still applies and in 
particular, if the discritization consists of a subgrid of some rectangular 
domain, the minimizing measure $\lambda_{\min}$ will be equal to the uniform 
distribution on the vertices of the chosen subgrid. The modified process $W$ as
in \eqref{eq:fBMconvex} still yields the minimal representation of the variogram
$\gamma$.

It is again the case of a non-convex variogram $\gamma$, which is more 
intricate. First, we know that Proposition~\ref{prop:non-convex} also still 
applies and typically, we can reduce the maximal variance by a significant 
amount by the same strategy as before, i.e.\ by considering the modification $W$
in \eqref{eq:fBMconvex} instead of the original representation $W_0$. In the 
example of an $\alpha$-fractional Brownian sheet we found this strategy useful 
when $\alpha$ is close to 1. Secondly, since we discretized our space, it is 
usually possible to solve
\begin{align*}
\int_K \gamma(\bm{x}-\bm{y}) \lambda_0(\rd\bm{y}) = 1, \qquad \bm{x} \in K
\end{align*}
explicitly for $\lambda_0$. If $\lambda_0$ is a non-negative measure (and not a 
signed measure), normalizing $\lambda_0$ so that it becomes a probability 
measure will then provide the minimal ($K$-stationary) solution 
$\lambda_{\min}$, see \cite{matheron74}, page~8. In case of an 
$\alpha$-fractional Brownian sheet, we found this strategy useful for $d=1$ or
$\alpha$ close to zero and our numerical experiments led us to the following
conjecture.

\begin{conj}  \label{conj:criticalalpha}
Let $K=\{\bm{x}_i\}_{i=1,\dots,N}$ be a finite set of distinct points in 
$\RR^d$, which contains at least two elements. Set
$\bm{\Gamma}_{ij}=\lVert \bm{x}_i-\bm{x}_j \rVert^\alpha$, $i,j = 1,\dots,N$, 
$\alpha \in (0,1]$ and let $\bm{e} \in \RR^N$ be the vector, whose entries are
all equal to $1$.
\begin{enumerate}[label={(\alph*)},wide=0mm,itemsep=0mm]
\item For $d=1$ and $\alpha \in (0,1]$ all entries of $\bm{\Gamma}^{-1}\bm{e}$ 
     are non-negative (we write $\bm{\Gamma}^{-1}\bm{e}\geq 0$).
\item For $d\geq 2$ there exists $\alpha_{\text{\normalfont critical}} =\alpha_{\text{\normalfont critical}}(K)\in (0,1]$,
     such that $\bm{\Gamma}^{-1}\bm{e} \geq 0$ for $\alpha \in (0,\alpha_{\text{\normalfont critical}}]$.
\item We have $\alpha_{\text{\normalfont critical}}(K)=1$ if and only  if the
      points in $K$ are collinear.
\end{enumerate}
\end{conj}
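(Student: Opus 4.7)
The plan rests on the interpretation of $\bm{\Gamma}^{-1}\bm{e}$ as (up to normalisation) the unique candidate weight vector for the discrete equilibrium equation $\bm{\Gamma}\bm{\lambda}=c\bm{e}$, so that non-negativity of $\bm{\Gamma}^{-1}\bm{e}$ is precisely the condition for the would-be minimising measure $\lambda_{\min}$ of \eqref{eq:minmeasure} to be a probability measure on the finite set $K$. Invertibility of $\bm{\Gamma}$ for $\alpha\in(0,2)$ comes for free from Lemma~\ref{lemma:Bernstein}: the restriction of $-\bm{\Gamma}$ to $\{\bm{a}\in\RR^N:\bm{a}^\top\bm{e}=0\}$ is strictly positive definite, forcing the signature $(1,N-1)$. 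For part (b) a continuity argument does the job: as $\alpha\downarrow 0$, the matrix $\bm{\Gamma}$ tends to $\bm{J}-\bm{I}$ entrywise with $\bm{J}=\bm{e}\bm{e}^\top$, and a direct calculation gives $(\bm{J}-\bm{I})^{-1}\bm{e}=\tfrac{1}{N-1}\bm{e}>\bm{0}$. Continuity of $\alpha\mapsto\bm{\Gamma}^{-1}\bm{e}$ then ensures that the admissible set contains a right-neighbourhood of $0$, and one defines $\alpha_{\text{\normalfont critical}}(K)$ as the supremum (capped at $1$) of those $\alpha$ for which non-negativity holds throughout $(0,\alpha]$.

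For part (a), I would work with the ordered 1D labelling $x_1<\cdots<x_N$ and split into $\alpha=1$ and $\alpha\in(0,1)$. For $\alpha=1$, taking successive differences of consecutive rows of $\bm{\Gamma}\bm{\lambda}=\bm{e}$ collapses the system, and a direct check shows the unique solution to be $\lambda_1=\lambda_N=(x_N-x_1)^{-1}$ and $\lambda_i=0$ for $1<i<N$. This is precisely the discrete analogue of the continuous minimiser $\tfrac{1}{2}(\delta_{x_1}+\delta_{x_N})$ from Example~\ref{example:fBMgeneral}. For $\alpha\in(0,1)$ I would exploit the subordinator representation
\[
|t|^\alpha=\frac{\alpha}{\Gamma(1-\alpha)}\int_0^\infty (1-e^{-u|t|})\,u^{-1-\alpha}\,\rd u,
\]
which decomposes $\bm{\Gamma}$ as a positive mixture of matrices $\bm{J}-\bm{K}_u$ with $(\bm{K}_u)_{ij}=e^{-u|x_i-x_j|}$. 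In one dimension the Ornstein--Uhlenbeck structure renders $\bm{K}_u^{-1}$ tridiagonal with explicit signs, yielding $(\bm{J}-\bm{K}_u)^{-1}\bm{e}\geq \bm{0}$ uniformly in $u>0$; integrating this inequality against the positive density $u^{-1-\alpha}\,\rd u$ preserves it and concludes (a).

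For part (c), the ``$\Leftarrow$'' direction follows from part (a) after applying the isometry sending the affine span of $K$ onto a line in $\RR$. The main obstacle is the ``$\Rightarrow$'' direction: for non-collinear $K$ one must exhibit $\alpha<1$ arbitrarily close to $1$ with $\bm{\Gamma}^{-1}\bm{e}$ having a negative coordinate. I would combine a contradiction argument with Proposition~\ref{prop:convex}: if $\bm{\Gamma}^{-1}\bm{e}\geq \bm{0}$ persisted up to $\alpha=1$, its normalisation would be a probability measure satisfying \eqref{eq:minmeasure} for the convex variogram $\|\cdot\|$, and \eqref{eq:matheroncond1} would then force its support into $\Ex(\text{conv}(K))$. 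For configurations in which some element of $K$ lies strictly interior to $\text{conv}(K)$, this contradicts the uniqueness of the linear system and delivers $\alpha_{\text{\normalfont critical}}(K)<1$. The delicate point -- and the true obstacle -- is that highly symmetric configurations such as the vertices of a regular simplex satisfy $\bm{\Gamma}=s^\alpha(\bm{J}-\bm{I})$, hence $\bm{\Gamma}^{-1}\bm{e}=(s^\alpha(N-1))^{-1}\bm{e}>\bm{0}$ for every $\alpha\in(0,1]$; this suggests that ``$\Rightarrow$'' as literally stated requires an additional geometric hypothesis (e.g.\ the presence of a point of $K$ interior to $\text{conv}(K)$), and isolating this hypothesis is where the main work lies.
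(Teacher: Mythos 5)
First, note that the paper offers no proof of this statement: it is presented explicitly as a \emph{conjecture} supported only by numerical experiments, so there is nothing in the paper to compare your argument against. Taken on its own terms, your proposal contains one sound piece, one genuine gap, and one genuinely valuable observation. The sound piece is part (b): entrywise $\bm{\Gamma}\to\bm{e}\bm{e}^\top-\bm{I}$ as $\alpha\downarrow 0$, the limit is invertible with $(\bm{e}\bm{e}^\top-\bm{I})^{-1}\bm{e}=\frac{1}{N-1}\bm{e}>\bm{0}$, invertibility of $\bm{\Gamma}$ for all $\alpha\in(0,2)$ follows from strict conditional negative definiteness, and the set of admissible $\alpha$ is relatively closed, so the supremum you define is attained. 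Your explicit check that $\bm{\lambda}=(x_N-x_1)^{-1}(1,0,\dots,0,1)^\top$ solves the system at $\alpha=1$ in $d=1$ is also correct.

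The gap is in part (a) for $\alpha\in(0,1)$. The subordination identity does give $\bm{\Gamma}=c_\alpha\int_0^\infty(\bm{J}-\bm{K}_u)\,u^{-1-\alpha}\,\rd u$ as a positive mixture, but matrix inversion does not commute with integration: from $(\bm{J}-\bm{K}_u)^{-1}\bm{e}\geq\bm{0}$ for every $u$ one cannot conclude $\bigl(\int(\bm{J}-\bm{K}_u)\,u^{-1-\alpha}\rd u\bigr)^{-1}\bm{e}\geq\bm{0}$; the class of symmetric matrices $\bm{M}$ with $\bm{M}^{-1}\bm{e}\geq\bm{0}$ is not closed under positive linear combinations. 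This is precisely the kind of maximum-principle/equilibrium-measure statement that makes the claim hard, and is presumably why the authors left it open. Your argument would need to be replaced by, e.g., a KKT analysis of the discrete capacity problem $\min_{\bm{\lambda}\geq 0,\,\bm{e}^\top\bm{\lambda}=1}(-\bm{\lambda}^\top\bm{\Gamma}\bm{\lambda})$ together with a proof that the optimizer has full support, which is not supplied. Finally, your observation about part (c) deserves emphasis: for any vertex-transitive configuration (regular simplex, the $2\times 2$ grid, indeed any $K$ on which ${\normalfont\Sym}(K)$ acts transitively) the row sums of $\bm{\Gamma}$ are constant, so $\bm{\Gamma}^{-1}\bm{e}$ is a positive multiple of $\bm{e}$ for \emph{every} $\alpha\in(0,1]$, giving $\alpha_{\text{\normalfont critical}}(K)=1$ without collinearity. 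This is not a defect of your proof but an apparent counterexample to the ``only if'' direction of (c) as literally stated; the conjecture seems to require an additional geometric hypothesis, as you suggest.
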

 

For the remaining cases the minimal solution  $\lambda_{\min}=\sum_{i=1}^N \lambda_i\delta_{\bm{x}_i}$
can be obtained from the solution $\bm{\lambda} \in \RR^N$ of the optimization problem
\begin{align}\label{eq:discreteoptimalmeasure}
\min_{\bm{\lambda}} \quad \max_{i=1}^N \frac{1}{2} (\bm{\lambda} - \bm{e}^i)^\text{T} (-\bm{\Gamma}) (\bm{\lambda} - \bm{e}^i) \qquad \text{subject to} \qquad \bm{e}^\text{T} \bm{\lambda} = 1, \quad \bm{\lambda} \geq \bm{0},
\end{align}
where the vector $\bm{e}^i$ is the $i$-th unit vector in $\RR^N$, the matrix $\bm{\Gamma}$ has entries $\bm{\Gamma}_{ij}=\gamma(\bm{x}_i-\bm{x}_j)$ and $\bm{e}=\sum_{i=1}^N \bm{e}^i$.
This is a non-linear convex optimization problem that can be 
solved via standard optimization techniques.


\section{Numerical results}
\label{sec:study}

\begin{table}
\vspace*{1mm}
\centering
\caption{\small 
Simulation scenarios in dimension 1 on the interval $K=[-1,1]$ for Brown-Resnick
processes associated to the variogram $\gamma({h})=\lvert h/s \rvert^\alpha$, see
Section~\ref{sec:study}. The scale $s$ is obtained from \eqref{eq:scale}.}
\label{table:scenarios}
\vspace*{1mm}
\centering
\begin{tabular}{cccccc}
\toprule
\rowcolor{black!10}
& \bf Feature of   & \bf Minimal Gaussian
& \bf Scale 1 & \bf Scale 2 & \bf Scale 3
\\
\rowcolor{black!10}
& \bf variogram  & \bf representation & $\sigma^2_K=0.5$ & $\sigma^2_K=1$ & $\sigma^2_K=2$\\
\midrule
$\alpha=0.7$ & concave & $K$-stationary
& $s=0.818$ & $s=0.304$ & $s=0.113$ \\
$\alpha=1.0$ & linear  & $K$-stat.\ / $\lambda=\text{Unif}(\Ex(K))$ 
& $s=1.000$ & $s=0.500$ & $s=0.250$  \\
$\alpha=1.3$ & convex & $\lambda=\text{Unif}(\Ex(K))$ 
& $s=1.253$ & $s=0.735$ & $s=0.431$  \\
\bottomrule
\end{tabular}
\end{table}

Our numerical study focuses on Brown-Resnick processes on the hyperrectangle
$K=[-1,1]^d$ in dimensions $d \in \{1,2\}$ with the underlying variograms
belonging to the fractional Brownian sheet family
$\gamma(\bm{h})=\lVert \bm{h}/s \rVert^\alpha$. 
It compares the performance of the threshold stopping algorithms that are based
on
\begin{enumerate}[label={(\roman*)},itemsep=0mm]
\item the \emph{original} Gaussian representation $W_0$ of the variogram $\gamma$ with $W_0(\bm{0})=0$,
\item the minimal $K$-\emph{stationary} Gaussian representation from \eqref{eq:locstatK},
\item the $\lambda$-\emph{modified} Gaussian representation from \eqref{eq:fBMconvex}.
\end{enumerate}

In dimension 1 we fix the actual simulation domain as the grid $\{-1,-0.996,
\dots,1\} \subset [-1,1]=K$ that consists of 501 equally spaced points and vary
both the smoothness parameter $\alpha \in (0,2)$ and scale $s > 0$ as shown in
Table~\ref{table:scenarios}. In each column the scale is chosen such that the 
minimal $K$-stationary Gaussian representation of the variogram \eqref{eq:Cmin1}
has the same variance $\sigma^2_K$ across the domain $K=[-1,1]$, that is 
\begin{align}\label{eq:scale}
  s = \bigg(\frac{1}{{2}\sqrt{\pi} \sigma^2_{K}} {\Gamma\Big(\frac{2-\alpha}{2}\Big)\Gamma\Big(\frac{1+\alpha}{2}\Big)}\bigg)^{1/\alpha}, \quad   \sigma^2_K \in \{0.5,1,2\}.
\end{align}
In the case $\alpha=1$ one could 
equivalently fix the scale as $s=1$ and vary the domain $K=[-R,R]$ across $R=1$,
$R=2$ and $R=4$. In dimension 2 we fix the simulation domain as the grid 
$\{-1,-0.9,\dots,1\}^2 \subset [-1,1]^2=K$ that consists of $21 \times 21$ points
and consider as a variogram the classical two-dimensional Brownian sheet variogram  
$\gamma(\bm{h})=(2 \sqrt{2}/\pi) \lVert \bm{h}\rVert$ where $\alpha=1$. The 
scale $s=\pi/{(2 \sqrt{2})}$ is chosen in such a way that the variance 
$\sigma^2_K$ of the associated $K$-stationary representation from 
\eqref{eq:locstatK} equals 1.


All simulation experiments are repeated $50\,000$ times. Based on the 
$50\,000$ simulated realizations of $Z^{(T)}$ and $50\,000$ independent
realizations of the corresponding spectral process $V$, we estimate the 
benchmark simulation error $\mathcal{P}_{K,\tau}$ according to \eqref{eq:prob-error}.
 The estimated errors are reported in Table~\ref{table:simuresultsDim1} for 
dimension 1 and Table~\ref{table:simuresultsDim2} for dimension 2. The smallest
error term in each scenario is always marked bold. 
To ensure consistency across the different scenarios, we first fix 
$\tau_{\text{\upshape $\lambda$-mod}}$ such that the resulting benchmark error
term $\widehat{\mathcal{P}}_{K,\tau}$ assumes a value around 0.1, i.e.\ the 
third column in Tables~\ref{table:simuresultsDim1} and \ref{table:simuresultsDim2} 
is always fixed (up to small deviations). To ensure a fair comparison of the 
different algorithms, we then choose the other thresholds 
$\tau_{\text{\upshape orig}}$, $\tau_{\text{\upshape $K$-stat}}$ in such a way 
that the mean number $\sE T_{K,\tau}$ of Gaussian processes that need to be 
simulated to obtain one single approximation to the Brown-Resnick process stays 
fixed within each scenario (up to relative deviations smaller  than $1\%$).

\begin{table}[t]
\centering
\caption{\small Benchmark error terms ${\widehat{\mathcal{P}}}_{K,\tau}$ for threshold stopping algorithms based on Gaussian representations for the  simulation scenarios of Table~\ref{table:scenarios}, where $\alpha \in \{0.7,1.0,1.3\}$, see Section~\ref{sec:study}.
}
\label{table:simuresultsDim1}
\vspace*{1mm}
\begin{tabular}{ll|ccccc}
\toprule
\rowcolor{black!10}
&& \bf Original & $\bm{K}$\bf{-} & $\bm{\lambda=}\text{Unif}(\Ex(K))$ \\
\rowcolor{black!10}
\multicolumn{2}{c|}{\bf Scenario} 
& \bf definition  & \bf stationary & \bf modification \\
\midrule
\multirow{3}{*}{\rotatebox[origin=c]{90}{\bf Scale 1}} 
& $\alpha=0.7$ & 0.19 & \textbf{0.05} & 0.10 \\
& $\alpha=1.0$ & 0.17 & \textbf{0.10} & \textbf{0.10} \\
& $\alpha=1.3$ & 0.16 & 0.17 & \textbf{0.10} \\
\midrule
\multirow{3}{*}{\rotatebox[origin=c]{90}{\bf Scale 2}}
& $\alpha=0.7$ & 0.25 & \textbf{0.04} & 0.11 \\
& $\alpha=1.0$ & 0.22 & \textbf{0.11} & \textbf{0.11} \\
& $\alpha=1.3$ & 0.20 & 0.20 & \textbf{0.10} \\
\midrule
\multirow{3}{*}{\rotatebox[origin=c]{90}{\bf Scale 3}}
& $\alpha=0.7$ & 0.32 & \textbf{0.03} & 0.10 \\
& $\alpha=1.0$ & 0.24 & \textbf{0.11} & \textbf{0.09} \\
& $\alpha=1.3$ & 0.21 & 0.24 & \textbf{0.10} \\
\bottomrule
\end{tabular}
\vspace*{1mm}
\centering
\caption{\small Benchmark error terms ${\widehat{\mathcal{P}}}_{K,\tau}$ for threshold
 stopping algorithms based on Gaussian representations for the simulation of 
 Brown-Resnick processes on the square $K=[-1,1]^2$ associated to the variogram 
 $\gamma(\bm{h})=(2 \sqrt{2}/\pi) \lVert \bm{h}\rVert$.}
\label{table:simuresultsDim2}
\vspace*{1mm}
\begin{tabular}{ll|ccccc}
\toprule
\rowcolor{black!10}
&& \bf Original & $\bm{K}$\bf{-} & $\bm{\lambda=}\text{Unif}(\Ex(K))$ \\
\rowcolor{black!10}
\multicolumn{2}{c|}{\bf Scenario} 
& \bf definition & \bf stationary & \bf modification \\
\midrule
\multicolumn{2}{c|}{$\sigma^2_{K}=1$, $\alpha=1.0$} 
& 0.23 & 0.30 & \textbf{0.10} \\
\bottomrule
\end{tabular}
\end{table}


The results in Tables~\ref{table:simuresultsDim1} and \ref{table:simuresultsDim2} allow us to compare the 
performance of the different algorithms according to the error term 
${\widehat{\mathcal{P}}}_{K,\tau}$ {within} a given scenario $(\alpha,s)$.
As explained in Example~\ref{example:fBMgeneral}, in dimension 1 we know for 
each scenario $(\alpha,s)$ which of the Gaussian representations of the 
variogram $\gamma(h)=\lvert h/s\rvert^\alpha$ leads to the minimal maximal 
variance across the simulation domain $[-1,1]$. It only depends on the value of
$\alpha$ as listed in Table~\ref{table:scenarios}. Indeed the simulation 
results in Table~\ref{table:simuresultsDim1} show that the smallest error term
${\widehat{\mathcal{P}}}_{K,\tau}$ is always attained by the algorithm that corresponds to
the minimal representation. This confirms our previous theoretical 
considerations on the influence of the maximal variance of the spectral process
on the performance of the threshold stopping algorithms.
In dimension 2 we focus on the classical scenario of a two-dimensional Brownian
sheet. As we can see from Table~\ref{table:simuresultsDim2} and as anticipated, 
simulations via the threshold stopping algorithm based on the $\lambda$-modified
representation exhibit smaller errors than simulations based on the $K$-stationary representation and simulations based on the original definition. This observation  is well in line with our theoretical considerations, since the maximal variance of 
the  $\lambda$-modified representation is the smallest among those three 
(cf.\ Figure~\ref{fig:Dim2variances_a071013_overview_orig_locstat_modified}).
It is even minimal in the sense of Problem~\ref{prob:minvar}, 
cf.\ Proposition~\ref{prop:convex}. 
In case $\alpha=1.3$ in dimension~1 and $\alpha=1$ in dimension $2$, 
the threshold-stopping algorithm that is based on the $K$-stationary representation performs slightly worse than the original definition. Here, Figures~\ref{fig:Dim1variances} and \ref{fig:Dim2variances_a071013_overview_orig_locstat_modified} show (for both cases) that the variance of the original field is much smaller than the $K$-stationary variance  on a large proportion of the relevant domain $[-1,1]^d$, which can explain this phenomenon as a reasonable subasymptotic effect. Otherwise, the ranking of the threshold stopping algorithms according to the error terms even  corresponds precisely to the ranking of the maximal variance on $[-1,1]$ for each of the remaining scenarios in dimension 1.

In addition, we compare the performance of the above threshold stopping algorithms that are based on Gaussian processes to the performance of three other established algorithms for simulation of max-stable processes, namely
\begin{enumerate}[label={(\roman*)},itemsep=0mm,resume]
\item the \emph{random shift} approach (cf.~\cite{oks12}), a threshold stopping algorithm, where the original log-Gaussian spectral process $V^{\text{(orig)}}$ is additionally shifted uniformly across the (finite!)\ simulation domain $K=\{\bm x_1,\ldots,\bm x_N\}$, i.e.\ its spectral process is
\begin{align*}
V^{\text{(shift)}}(\bm{x})=V^{\text{(orig)}}(\bm{x}-\bm{S}), \quad \bm{x} \in K=\{\bm x_1,\ldots,\bm x_N\},
\end{align*}
where ${\bm S}$ is uniformly distributed on  $K=\{\bm x_1,\ldots,\bm x_N\}$ and independent of $V^{\text{(orig)}}$.
\item the \emph{Dieker-Mikosch} approach (cf.~\cite{dm15} and \cite{deo16}),
another  threshold stopping algorithm, which is based on the (sum-)normalized spectral process of the form
\begin{align} \label{eq:dm-spectral}
 V(\bm x) = N \frac{\exp(W^{\text{(orig)}}(\bm x - \bm S))}{\sum_{k=1}^N \exp(W^{\text{(orig)}}(\bm x - \bm x_k))}, \quad \bm x \in K=\{\bm x_1,\ldots,\bm x_N\},
\end{align}
where ${\bm S}$ is again uniformly distributed on $K=\{\bm x_1,\ldots,\bm x_N\}$ and independent of $W^{\text{(orig)}}$.
\item the \emph{extremal functions} approach (cf.~\cite{deo16}) with exact 
   simulation taking place on a subset of $N_{\text{\upshape extrfun}}$ 
   pre-specified equi-spaced locations in the simulation domain only.
\end{enumerate}

To ensure a fair comparison, the thresholds for the random shift approach and the Dieker-Mikosch algorithm are again chosen in such a way that the mean number $\sE T_{K,\tau}$ of Gaussian processes is fixed within each scenario (up to relative deviations smaller than $1\%$). Likewise, we fix the expected number of simulated Gaussian processes for the extremal functions approach, for which it corresponds to  the expected number $N_{\text{\upshape extrfun}}$ of locations for which exact simulation is ensured \citep[cf.][]{deo16}.
Since all scales in Table~\ref{table:simuresultsDim1} seem to reproduce a similar qualitative behaviour, we focus in this experiment on the scenarios in Table~\ref{table:scenarios} corresponding to Scale~1 ($\sigma_K^2=0.5$) and explore a slightly larger range of $\alpha \in \{0.1,0.4,\dots,1.9\}$. The results are reported in Table~\ref{table:simuresultsEFDM} and plotted in Figure~\ref{fig:simuresultsEFDM}.

\begin{table}
\caption{\small Benchmark error terms ${\widehat{\mathcal{P}}}_{K,\tau}$ for
 threshold stopping algorithms based on Gaussian representations and three further
 algorithms for the  simulation scenarios of Table~\ref{table:scenarios} with 
  Scale~1 ($\sigma_K^2=0.5$), see Figure~\ref{fig:simuresultsEFDM} for an illustration.
}
\label{table:simuresultsEFDM}
\vspace*{1mm}
\tabcolsep1.5mm
\begin{tabular}{ll|ccc|ccc}
\toprule
\rowcolor{black!10}
&& \bf Original & $\bm{K}$\bf{-} & $\bm{\lambda=}\text{Unif}(\Ex(K))$ 
& \bf Random & \bf Dieker- & \bf Extremal\\
\rowcolor{black!10}
\multicolumn{2}{c|}{\bf Scenario} 
& \bf definition  & \bf stationary & \bf modification 
& \bf shift & \bf Mikosch & \bf functions\\
\midrule
\multirow{7}{*}{\rotatebox[origin=c]{90}{{\bf Scale 1}}} 
& $\alpha=0.1$ & 0.244 & \textbf{0.007} & 0.098 
& 0.263 & \textbf{0.005} & 0.899 \\
& $\alpha=0.4$ & 0.204 & \textbf{0.018} & 0.095 
& 0.245 & \textbf{0.003} & 0.822 \\
& $\alpha=0.7$ & 0.186 & \textbf{0.051} & 0.103 
& 0.265 & \textbf{0.006} & 0.686 \\
& $\alpha=1.0$ & 0.168 & \textbf{0.102} & \textbf{0.101} 
& 0.236 & \textbf{0.017} & 0.521 \\
& $\alpha=1.3$ & 0.161 & 0.172 & \textbf{0.102} 
& 0.223 & \textbf{0.037} & 0.364 \\
& $\alpha=1.6$ & 0.130 & 0.241 & \textbf{0.102} 
& 0.171 & \textbf{0.063} & 0.231 \\
& $\alpha=1.9$ & 0.109 &  0.362 & \textbf{0.101}  
& 0.121 & \textbf{0.094} & 0.210 \\
\bottomrule
\end{tabular}
\end{table}

\begin{figure}[h]
\centering
\begin{tabular}{lc}
\rotatebox[origin=c]{90}{\begin{minipage}{7cm} \centering Average Error Probability ${\widehat{\mathcal{P}}}_{K,\tau}$ \end{minipage}} 
& \begin{minipage}{8.9cm}
\includegraphics[height=7cm]{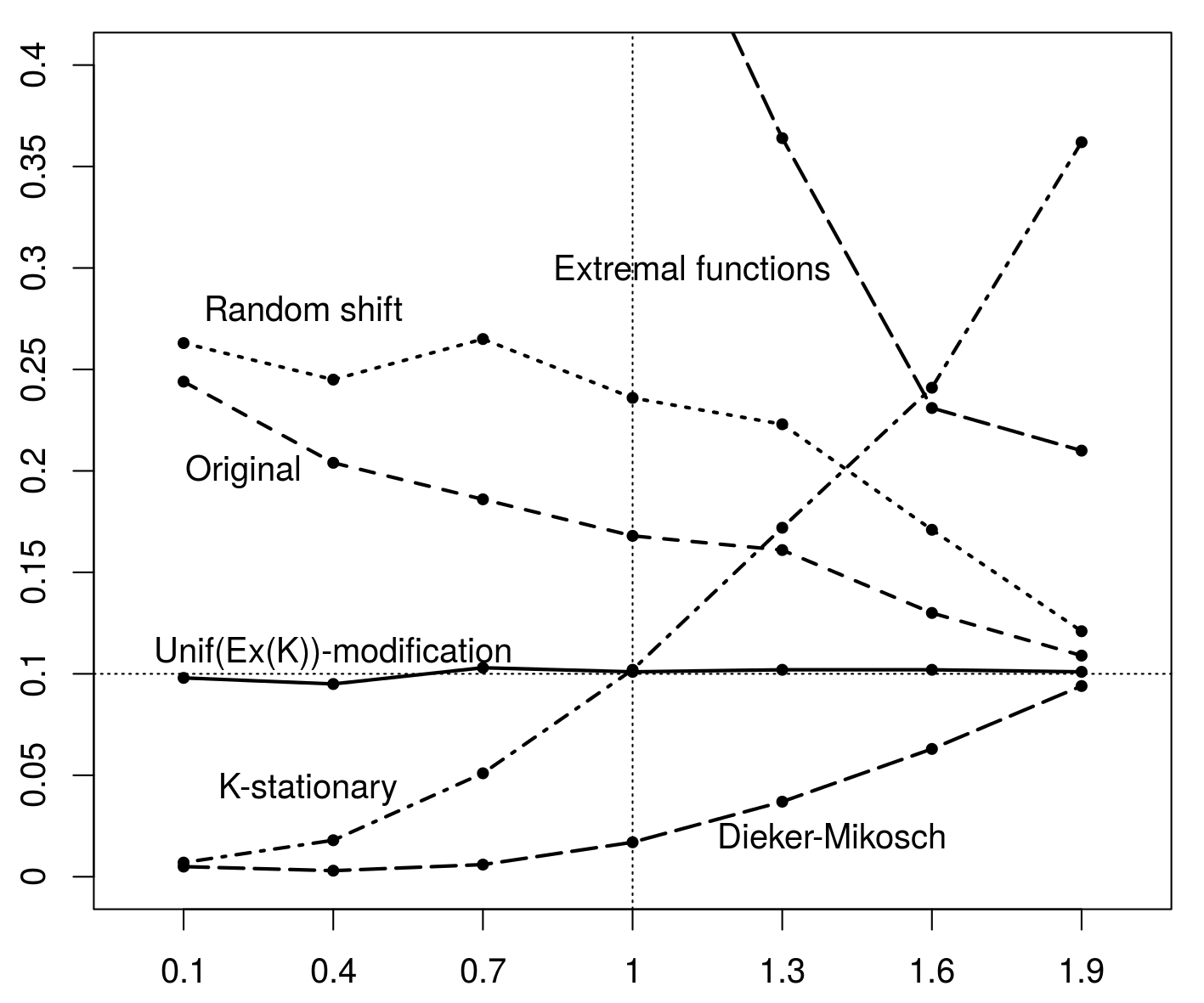}
\end{minipage}\\
& $\alpha$
\end{tabular}
\caption{\small Benchmark error terms ${\widehat{\mathcal{P}}}_{K,\tau}$ for threshold stopping algorithms based on Gaussian representations (``\textsf{Original}'', ``\textsf{Unif(Ex(K))-modification}'', ``\textsf{K-stationary}'') and three further  algorithms (``\textsf{Random shift}'', ``\textsf{Dieker-Mikosch}'', ``\textsf{Extremal functions}'') for the  simulation scenarios of Table~\ref{table:scenarios} with Scale~1 ($\sigma_K^2=0.5$), see Table~\ref{table:simuresultsEFDM} for the estimated values.}
\label{fig:simuresultsEFDM}
\end{figure}

A first observation (that we found surprising at first sight) is the poor performance of the extremal functions approach and the strong performance of the Dieker-Mikosch approach in this experiment. We did not expect this previously, since for \emph{exact} simulation the extremal functions approach always outperforms the Dieker-Mikosch algorithm in terms of expected number of simulated Gaussian processes, see \cite{deo16} and \cite{os18review}, i.e.\ one should prefer the extremal functions approach in this case. We conclude that allowing for a simulation error makes the Dieker-Mikosch approach competitive again with other methods. Intuitively, this can be explained by its spectral functions \eqref{eq:dm-spectral} being bounded and probabilistically homogeneous in space which makes its convergence rate 
be quite fast at the beginning. While the random shift approach also enforces homogeneity of the spectral functions, this is  however done in this case at the cost of increasing the maximal variance of the spectral functions, which explains why its performance is even worse than the performance of the original Gaussian representation.
Generally, the three threshold-stopping algorithms that are based on \emph{Gaussian} representation lie all inbetween the random shift approach (apart from too high values of $\alpha$, when the $K$-stationary approach is the worst) and the Dieker-Mikosch approach. Among them, one can also see the phase transition at $\alpha=1$ between the $K$-stationary approach performing better for $\alpha<1$ and the $\lambda$-modification taking over for $\alpha >1$ quite well. As $\alpha$ gets close to zero, the $K$-stationary approach and the Dieker-Mikosch approach show a similar error rate, whereas the $\lambda$-modification and the Dieker-Mikosch approach perform similarly as $\alpha$ gets close to $2$.

\section{Discussion}\label{sec:discussion}

Efficient simulation of Brown-Resnick processes is an important task that is 
often needed to describe the extremal behaviour of spatial random fields.
As exact simulation of such processes can be very time-consuming, in particular
when the simulation domain consists of a large number of points, it is often
necessary to resort to the simulation of approximations of these processes. 
This can be done by either cutting an exact algorithm short or by running an 
inexact stopping algorithm. A priori it is unclear which algorithm will lead
to the smallest error term for a fixed simulation time or vice versa. Our focus here lies on the classical threshold-stopping algorithm \citep{schlather02}, where we consider different (log-)Gaussian spectral representations.
Trying to mitigate the problem of the occurrence of exessively large variances of the spectral functions has lead us to the following optimization problem: 
\begin{center}
\emph{
\textbf{Minimization problem.} Among Gaussian 
  processes with prescribed variogram $\gamma$,\\ find a process $W$ such that 
  $\sup_{x \in K} \var(W(x))$ is minimal.
}
\end{center}
This is a difficult mathematical problem that has been of independent interest
for the case of $K$-stationary solutions in geostatistics (or locally equivalent
stationary solutions, respectively), see 
e.g.~\cite{analogies01,gneiting99,gneiting2000addendum,stein01,cd,matheron73}.
To the best of our knowledge, an explicit solution is known only in very few 
cases such as for the variogram of a fractional Brownian motion on an interval
(cf.~Example~\ref{example:fBMgeneral} stating \cite{matheron74}). Solutions 
also depend quite heavily on the geometry of the simulation domain $K$. 
Generally, \citeauthor{matheron74}'s (\citeyear{matheron74}) contribution does
not seem to have received very much attention in the literature so far. 

Here, we first make his description of a solution for convex variograms more explicit
for symmetric domains (cf.~Proposition \ref{prop:convex}) and second, we prove 
that the strategy that is employed for symmetric domains can even be applied to
most practically relevant non-convex variograms on hyperrectangular domains in
order to achieve a substantial reduction of the maximal variance 
$\sup_{x \in K} \var(W(x))$, albeit not a minimal solution (cf.~Proposition 
\ref{prop:non-convex}). We also consider discretization effects and conjecture for
the fractional Brownian sheet family with variogram $\lVert h \rVert^\alpha$ the 
existence of a critical value of $\alpha$, below of which the solution to the 
minimization problem is $K$-stationary (cf.~Conjecture~\ref{conj:criticalalpha}).

One of the nice features of the variance reduction is that it can be very easily implemented. Our simulation study confirms that  the proposed modification of threshold-stopping algorithms that are based on a Gaussian representation can lead to significant improvements of the probability that no approximation error occurs while the expected running time is fixed. 
We expect similar performance improvements also for other features, e.g.\ when other types of errors are considered, cf.\ \cite{os18review}. 
Further improvements are possible, when taking into account discretization effects, i.e.\ modifying the Gaussian representation of the variogram $\gamma$ according to \eqref{eq:minmeasure} using the discrete minimizing measure \eqref{eq:discreteoptimalmeasure}. In our numerical study, we always considered a relatively dense grid, such that discretization effects (not reported) did not play a significant role. However, we conclude from further experiments (not reported) that they can become relevant for coarser designs.

A comparison with three other established simulation algorithms demonstrates that our approach can compete with the (potentially exact)
extremal functions method \citep{deo16} and always outperforms the random shift modification \citep{oks12} due to its increased variance. 
We were unable to detect a scenario, in which our approach would outperform the Dieker-Mikosch algorithm \citep{dm15}. The latter seems
to converge relatively fast at the beginning stages of a simulation due  to the boundedness and probabilistic homogeneity of the spectral functions in this threshold-stopping approach.
However, when increasing the required accuracy, one should expect a phase transition, in which the Dieker-Mikosch approach performs better first, before the extremal functions approach  takes over again.
If we consider only the threshold-stopping approach based on \emph{Gaussian} spectral representations, the proposed variance reduction leads to the best performances as expected. Further comparisons of different simulation algorithms are beyond the scope of this paper, but will be addressed in \cite{os18review}.




\appendix

\section{Proofs}\label{sec:proofs}

\subsection{Proofs for Section~\ref{sec:specrep}}

\begin{prop} \label{prop:variance-tail}
 Let $\{W_1(\bm{x}), \bm{x} \in K\}$ and $\{W_2(\bm{x}), \bm{x} \in K\}$ be two
 centered Gaussian processes with a.s.\ bounded sample paths and bounded 
 variance functions $\sigma_i^2(\bm{x}) = \var(W_i(\bm{x}))$, $i=1,2$. 
 Further, let $\sigma_{1,\max}^2 < \sigma_{2,\max}^2$ where 
 $\sigma_{i,\max}^2 = \sup_{\bm{x} \in K} \sigma_i^2(\bm{x}) < \infty$ for $i=1,2$.
 Then, for any function $y \in C(K)$
 \begin{align*} 
\lim_{u \to \infty} \frac{\sP\big(\sup_{\bm{x} \in K} W_1(\bm{x}) - \sigma_1^2(\bm{x})/2 - y(\bm{x}) > u \big)}
                         {\sP\big(\sup_{\bm{x} \in K} W_2(\bm{x}) - \sigma_2^2(\bm{x})/2 - y(\bm{x})  > u \big)} = 0. 
\end{align*}
\end{prop}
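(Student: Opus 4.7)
The plan is to compare the two tail probabilities via their exponential rates, using the classical fact that, for centered Gaussian processes on a compact set with a.s.\ bounded paths, both $\log \sP(\sup_K W > u)$ and $\log \sP(W(\bm{x}^*) > u)$ behave like $-u^2/(2\sigma_{\max}^2)$. Since $\sigma_{1,\max}^2 < \sigma_{2,\max}^2$, the numerator decays strictly faster than the denominator, and the bounded deterministic perturbations $f_i(\bm{x}) = -\sigma_i^2(\bm{x})/2 - y(\bm{x})$ (bounded because $\sigma_i^2$ and $y$ are bounded, with $|f_i|\leq M_i$ say) only shift the argument by a constant and therefore do not affect the leading exponential rate. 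The reduction is
\[
\sP\bigl(\sup_K(W_i+f_i)>u\bigr)\leq \sP\bigl(\sup_K W_i>u-M_i\bigr),\qquad \sP\bigl(\sup_K(W_i+f_i)>u\bigr)\geq \sP\bigl(\sup_K W_i>u+M_i\bigr).
\]

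First I would bound the numerator from above by Borell--TIS. Since $W_1$ has a.s.\ bounded paths, $m_1:=\sE\sup_K W_1<\infty$, and for $u>m_1+M_1$,
\[
\sP\bigl(\sup_K(W_1+f_1)>u\bigr)\;\leq\;\exp\!\left(-\frac{(u-M_1-m_1)^2}{2\sigma_{1,\max}^2}\right).
\]
Next I would bound the denominator from below by evaluating at a single (near-)maximizing point of $\sigma_2^2$. Fix $\epsilon\in(0,(\sigma_{2,\max}^2-\sigma_{1,\max}^2)/2)$ and choose $\bm{x}^*\in K$ with $\sigma_2^2(\bm{x}^*)>\sigma_{2,\max}^2-\epsilon$ (which exists by definition of the supremum; attainment is not needed). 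Then, using the standard Mills-ratio lower bound on the Gaussian tail,
\[
\sP\bigl(\sup_K(W_2+f_2)>u\bigr)\;\geq\;\sP\bigl(W_2(\bm{x}^*)>u-f_2(\bm{x}^*)\bigr)\;\geq\; c_\epsilon\,u^{-1}\exp\!\left(-\frac{u^2}{2(\sigma_{2,\max}^2-\epsilon)}\right)
\]
for all $u$ large enough, where $c_\epsilon>0$ depends only on $\epsilon$ and $\sup_K|f_2|$.

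Dividing the two estimates and taking logarithms, one obtains
\[
\log\frac{\sP(\text{num})}{\sP(\text{denom})}\;\leq\;-\frac{u^2}{2}\left(\frac{1}{\sigma_{1,\max}^2}-\frac{1}{\sigma_{2,\max}^2-\epsilon}\right)+O(u),
\]
and the coefficient of $u^2$ is strictly negative by the choice of $\epsilon$, so the right-hand side tends to $-\infty$ as $u\to\infty$. This yields the stated limit.

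The only subtle point is that $\sigma_{2,\max}^2$ need not be attained, so the single-point lower bound uses a near-maximizer and incurs a loss of $\epsilon$ in the exponent; I would absorb this loss by choosing $\epsilon$ strictly less than the variance gap, which is the reason the hypothesis is a strict inequality. The a.s.\ boundedness of paths is used twice: it ensures $m_i<\infty$ and it guarantees Borell--TIS applies, but apart from these standard facts no deep machinery is required.
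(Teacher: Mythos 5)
Your proof is correct. It reaches the same destination as the paper's proof --- the comparison of the exact logarithmic decay rates $u^2/(2\sigma_{1,\max}^2)$ versus $u^2/(2\sigma_{2,\max}^2)$, with the bounded drift $-\sigma_i^2(\cdot)/2 - y(\cdot)$ shown to be irrelevant at this scale --- but by a more self-contained route. The paper simply invokes Proposition~1 of \citet{debicki-etal-10} to assert that
\begin{align*}
-\log \sP\Big(\sup_{\bm{x}\in K} W_i(\bm{x}) - \sigma_i^2(\bm{x})/2 - y(\bm{x}) > u\Big) \sim \frac{u^2}{2\sigma_{i,\max}^2},
\end{align*}
sandwiches both probabilities between $\exp(-u^2(1\pm\varepsilon)/(2\sigma_{i,\max}^2))$, and divides. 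You instead prove the two one-sided bounds you actually need from first principles: the Borell--TIS inequality gives the upper bound on the numerator with the correct constant $\sigma_{1,\max}^2$ in the exponent, and a single-point evaluation at a near-maximizer of $\sigma_2^2$ combined with the Mills-ratio bound gives the lower bound on the denominator with constant $\sigma_{2,\max}^2 - \epsilon$; choosing $\epsilon$ smaller than the variance gap closes the argument. This is leaner in one respect (you never need a lower bound for $W_1$ or an upper bound for $W_2$) and makes transparent exactly where the a.s.\ boundedness of paths and the strictness of $\sigma_{1,\max}^2 < \sigma_{2,\max}^2$ enter, at the cost of redoing work that the cited reference packages. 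The only caveat, which neither proof addresses and which is immaterial, is the degenerate case $\sigma_{1,\max}^2 = 0$, where the numerator vanishes identically for large $u$ and the conclusion is trivial.
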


\begin{proof}[Proof of \textbf{\upshape Proposition~\ref{prop:variance-tail}}]
By Proposition~1 in \citet{debicki-etal-10}, for $i=1,2$, we have
\begin{align*}
- \log \sP\bigg(\sup_{\bm{x} \in K} W_i(\bm{x}) - \sigma_i^2(\bm{x})/2 - y(\bm{x}) > u \bigg)
\sim  - \log \sP\bigg(\sup_{\bm{x} \in K} W_i(\bm{x}) > u \bigg)
\sim  \frac{u^2}{2 \sigma^2_{i,\max}} 
\end{align*}
 as $u \to \infty$. Consequently, for each $\varepsilon >0$, there is 
 some $u_i(\varepsilon) > 0$ such that for all $u > u_i(\varepsilon)$
 \begin{align*}
 \exp\bigg(-\frac{u^2(1+\varepsilon)}{2 \sigma_{i,\max}^2}\bigg) 
    \leq{} \sP\bigg(\sup_{\bm{x} \in K} W_i(\bm{x}) - \sigma_i^2(\bm{x})/2 - y(\bm{x}) > u \bigg) 
    \leq{} \exp\bigg(-\frac{u^2(1-\varepsilon)}{2 \sigma_{i,\max}^2}\bigg). 
 \end{align*}
 Now, let $\varepsilon > 0$ be sufficiently small such that 
 $(1-\varepsilon) \sigma^2_{2,\max} > (1+\varepsilon) \sigma^2_{1,\max}$. Then, for all
 $u > \max\{u_1(\varepsilon), u_2(\varepsilon)\}$, we have
 \begin{align*}
  \frac{\sP\big(\sup_{\bm{x} \in K} W_1(\bm{x}) - \sigma_1^2(\bm{x})/2 - y(\bm{x}) > u \big)}
       {\sP\big(\sup_{\bm{x} \in K} W_2(\bm{x}) - \sigma_2^2(\bm{x})/2 - y(\bm{x}) > u \big)}
  \leq{} & \exp\bigg( - \frac{u^2}{2} \cdot \bigg( \frac{1-\varepsilon}{\sigma^2_{1,\max}} - \frac{1+\varepsilon}{\sigma^2_{2,\max}} \bigg) \bigg),
 \end{align*}
 which implies the assertion.
\end{proof}

\subsection{Proofs for Section~\ref{sec:minloggauss}}

\begin{proof}[Proof of \textbf{\upshape Proposition~\ref{prop:convex}}]
First note that $\Ex(K)$ is a (locally compact) homogeneous space with respect
to the action of $\Sym(\Ex(K)) \subset O(\RR^d)$ and so a unique normalized left 
invariant Haar measure $\lambda$ exists on $\Ex(K)$ which we call uniform 
distribution, cf.~e.g.~\cite{nachbin76} or \cite{mardiakhatri77}.
Its support $\text{Supp}(\lambda)=\Ex(K)$ is necessarily a subset
of $\Ex(K)$, which establishes \eqref{eq:matheroncond1} for $\lambda$. 
Further, observe that the assignment
\begin{align*}
\bm{x} \mapsto \int_K \gamma(\bm{x}- \tilde{\bm{y}}) \, \lambda(\rd\tilde{\bm{y}}) 
= \int_{\Ex(K)} \gamma(\bm{x}- \tilde{\bm{y}}) \, \lambda(\rd\tilde{\bm{y}}) 
\end{align*}
is also a convex function (since $\gamma$ is convex). In particular, it attains
its maximal value $m$ on $K$ for some $v \in \Ex(K)$. Since 
$\Sym(\Ex(K))\subset O(\RR^d)$ acts transitively on $\Ex(K)$, any element in $\Ex(K)$ 
can be represented as $\bm{M}\bm{v}$ for some $\bm{M} \in O(\RR^d)$. Since 
$\gamma(\bm{h})$ depends only on $\lVert \bm{h} \rVert$, this gives
\begin{align*}
\int_{\Ex(K)} \gamma(\bm{M}\bm{v}- \tilde{\bm{y}}) \, \lambda(\rd\tilde{\bm{y}}) 
= \int_{\Ex(K)} \gamma(\bm{M}\bm{v}- \bm{M} \tilde{\bm{y}}) \, \lambda(\rd\tilde{\bm{y}}) 
= \int_{\Ex(K)} \gamma(\bm{v}- \tilde{\bm{y}}) \, \lambda(\rd\tilde{\bm{y}}).
\end{align*}
So, in fact, all elements of $\Ex(K)$ attain this maximal value $m$, which
implies
\begin{align*}
\int_K \int_K \gamma(\tilde{\bm{x}}- \tilde{\bm{y}}) \, \lambda(\rd\tilde{\bm{x}}) \, \lambda(\rd\tilde{\bm{y}}) 
= \int_{\Ex(K)} \int_{\Ex(K)} \gamma(\tilde{\bm{x}}- \tilde{\bm{y}}) \, \lambda(\rd\tilde{\bm{x}}) \, \lambda(\rd\tilde{\bm{y}}) 
= \int_{\Ex(K)}  m \lambda(\rd\tilde{\bm{y}}) = m.
\end{align*}
Finally, this gives
\begin{align*}
\int_K \gamma(\bm{x}- \tilde{\bm{y}}) \, \lambda(\rd\tilde{\bm{y}}) &\leq m \leq
\int_K \int_K \gamma(\tilde{\bm{x}}- \tilde{\bm{y}}) \, \lambda(\rd\tilde{\bm{x}}) \, \lambda(\rd\tilde{\bm{y}}) 
\end{align*}
for all $\bm{x} \in \Ex(K)$, as desired (cf.\ Condition \eqref{eq:matheroncond2}).
\end{proof}

\begin{proof}[Proof of \textbf{\upshape Lemma~\ref{lemma:Bernstein}}]
First note that (i) is equivalent to $\psi$ being negative definite in the 
sense of \eqref{eq:NDpsi}. The equivalence ``(i) $\Leftrightarrow$ (ii)'' is 
then immediate from Corollary~5.1.8 of \cite{bcr84} (page 150 therein) and the
fact that $(\RR,\RR_+,x^2)$ is a Schoenberg triple, cf.\ Example~5.1.3 of
\cite{bcr84} (page 146 therein). Second, the equivalence 
``(i) $\Leftrightarrow$ (iii)'' follows from Corollary~4.6.8 of \cite{bcr84}
(page 133 therein) and the 2-divisibility of the semigroup $(\RR_+,+)$.
\end{proof}

\begin{lem} \label{lemma:TWOalternating}
Let $\psi:\RR_+\rightarrow \RR$ be $n$-alternating up to $n\leq 2$ and 
$0 \leq x \leq R$ as well as $0 \leq x_i\leq R_i$ for $i=1,\dots,d$. Then the
following inequalities hold true for any $a\geq 0$.
\begin{enumerate}[label={(\alph*)}, wide=0mm]
\item $\psi(a + (R-x)^2) + \psi(a + (R+x)^2) \leq  \psi(a + R^2) + \psi(a + 3R^2)$
\item $\sum_{A \subset \{1,\dots,d\}} \psi(a+\sum_{i \in A} (x_i-R_i)^2 + \sum_{j \in A^c} (x_j+R_j)^2)$\\[2mm] 
$\phantom{(a)} \leq \sum_{A \subset \{1,\dots,d\}} \psi(a+3 \sum_{i \in A} R_i^2 + \sum_{j \in A^c} R_j^2)$ 
\end{enumerate}
\end{lem}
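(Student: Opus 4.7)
The plan is to establish (a) directly from 2-alternation combined with monotonicity (i.e.\ 1-alternation) of $\psi$, and then to deduce (b) by induction on $d$ using (a) at each step.

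For (a), I would apply the 2-alternation inequality \eqref{eq:CApsi} with the choices $s = a + (R-x)^2$, $s_1 = R^2 - (R-x)^2$, $s_2 = (R+x)^2 - R^2$. Since $0 \leq x \leq R$, both $s_1$ and $s_2$ are non-negative, so the inequality is applicable. A short computation gives $s+s_1 = a+R^2$, $s+s_1+s_2 = a+(R+x)^2$ and $s+s_2 = a + R^2 + 2x^2$, yielding
\begin{align*}
\psi(a+(R-x)^2) + \psi(a+(R+x)^2) \leq \psi(a+R^2) + \psi(a+R^2+2x^2).
\end{align*}
Because $x \leq R$ implies $R^2 + 2x^2 \leq 3R^2$, applying 1-alternation to the second term on the right completes (a).

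For (b), I would proceed by induction on $d$, with the base case $d=1$ being exactly (a). For the induction step from $d$ to $d+1$, split the summation over $A \subset \{1,\ldots,d+1\}$ by whether $d+1 \in A$ or not and pair up the two resulting terms: setting $B = A \cap \{1,\ldots,d\}$ and
\begin{align*}
c_B = \sum_{i \in B}(x_i-R_i)^2 + \sum_{j \in \{1,\ldots,d\}\setminus B}(x_j+R_j)^2,
\end{align*}
the left-hand side becomes $\sum_{B \subset \{1,\ldots,d\}} \bigl[\psi(a+c_B+(x_{d+1}-R_{d+1})^2) + \psi(a+c_B+(x_{d+1}+R_{d+1})^2)\bigr]$. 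Applying (a) termwise (with $a+c_B$ in place of $a$, $R_{d+1}$ for $R$, and $x_{d+1}$ for $x$) bounds this by $\sum_B \psi((a+R_{d+1}^2)+c_B) + \sum_B \psi((a+3R_{d+1}^2)+c_B)$. The induction hypothesis now applies to each sum separately, with $a$ replaced by $a+R_{d+1}^2$ or $a+3R_{d+1}^2$ respectively. Reindexing, the first sum produces exactly the terms on the right-hand side of (b) corresponding to $A$ with $d+1 \notin A$, while the second produces the terms with $d+1 \in A$.

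I do not foresee any genuine obstacle: once the auxiliary variables $s_1, s_2$ for (a) have been identified and their non-negativity verified from $0 \leq x \leq R$, everything reduces to bookkeeping. The same is true for the induction in (b), where the only content to check is that the split on membership of $d+1$ in $A$ lines up with the two additive shifts $R_{d+1}^2$ and $3R_{d+1}^2$ produced by part (a). Notably, only 1- and 2-alternation of $\psi$ are used, consistent with Remark~3.5(a).
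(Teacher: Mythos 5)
Your proof is correct and follows essentially the same route as the paper: part (a) is the 2-alternation inequality with exactly the shifts $s_1=2Rx-x^2$, $s_2=2Rx+x^2$ used in the paper, followed by monotonicity to replace $R^2+2x^2$ by $3R^2$, and part (b) is the same induction on $d$ splitting on whether $d+1\in A$. No gaps.
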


\begin{proof}
\begin{enumerate}[label={(\alph*)},wide=0mm]
\item By the alternation properties of $\psi$, we have that 
\begin{align*}
&\psi(a+(R-x)^2) + \psi(a + (R+x)^2) - \psi(a + R^2) - \psi (a + 3R^2)\\
\leq{}& \psi(a+(R-x)^2) + \psi(a + (R+x)^2) - \psi(a + R^2) - \psi (a + R^2 + 2x^2)\\
={}&\Delta_{2Rx-x^2} \Delta_{2Rx+x^2} \psi(a + (R-x)^2) \leq 0
\end{align*}
\item The assertion follows by induction on $d$. For $d=1$ it is evident from 
(a). For the step from $d$ to $d+1$ note that
\begin{align*}
&\sum_{A \subset \{1,\dots,d+1\}} \psi\bigg(a+\sum_{i \in A} (x_i-R_i)^2 + \sum_{j \in A^c} (x_j+R_j)^2\bigg)\\
={}& \sum_{B \subset \{1,\dots,d\}} \psi\bigg(a+\sum_{i \in B} (x_i-R_i)^2 + \sum_{j \in B^c} (x_j+R_j)^2+(x_{d+1}-R_{d+1})^2\bigg)\\
& + \sum_{B \subset \{1,\dots,d\}} \psi\bigg(a+\sum_{i \in B} (x_i-R_i)^2 + \sum_{j \in B^c} (x_j+R_j)^2+(x_{d+1}+R_{d+1})^2\bigg).
\end{align*}
According to (a) the latter is less than or equal to 
\begin{align*}
& \sum_{B \subset \{1,\dots,d\}} \psi\bigg(a+\sum_{i \in B} (x_i-R_i)^2 + \sum_{j \in B^c} (x_j+R_j)^2+3R_{d+1}^2\bigg)\\
& + \sum_{B \subset \{1,\dots,d\}} \psi\bigg(a+\sum_{i \in B} (x_i-R_i)^2 + \sum_{j \in B^c} (x_j+R_j)^2+R_{d+1}^2\bigg).
\end{align*}
Now, the induction hypothesis can be 
applied and gives that the latter is less than or equal to 
\begin{align*}
&\sum_{B \subset \{1,\dots,d\}} \psi\bigg(a+3\sum_{i \in B} R_i^2 + \sum_{j \in B^c} R_j^2+R_{d+1}^2\bigg)
+ \sum_{B \subset \{1,\dots,d\}} \psi\bigg(a+3\sum_{i \in B} R_i^2 + \sum_{j \in B^c} R_j^2+3R_{d+1}^2\bigg) \\
={}& \sum_{A \subset \{1,\dots,d+1\}} \psi\bigg(a+3\sum_{i \in A} R_i^2 + \sum_{j \in A^c} R_j^2\bigg).\qedhere
\end{align*}
\end{enumerate}
\end{proof}

\begin{lem} \label{lemma:THREEalternating}
Let $\psi:\RR_+\rightarrow \RR$ be $n$-alternating up to $n\leq 3$ and 
$\psi(0)=0$. Then we have that for $a,b\geq 0$
\begin{align*}
\psi(3 a + b)  - \frac{1}{2} \psi(4 a) \leq \psi(a + b). 
\end{align*}
\end{lem}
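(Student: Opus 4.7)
The inequality rearranges to $\psi(3a+b)-\psi(a+b)\le \tfrac12\psi(4a)$. The left-hand side is an increment of $\psi$ across a window of length $2a$ shifted by $b$, while the right-hand side is half of $\psi$ evaluated at the total span $4a$ with \emph{no} $b$-shift. The factor $\tfrac12$ together with the disappearance of $b$ on the right suggest a two-step strategy: use 3-alternation to extract the factor $\tfrac12$ together with an auxiliary difference $\psi(4a+b)-\psi(b)$, and then use 2-alternation (combined with $\psi(0)=0$) to erase the $b$-shift from this auxiliary difference.

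The plan is as follows. First, apply the 3-alternation inequality \eqref{eq:CApsi} at base point $s=b$ with the choice $(s_1,s_2,s_3)=(2a,a,a)$. Expanding the $2^3=8$ subset terms, the contributions from the subsets $\{1\}$ and $\{2,3\}$ cancel, leaving
\begin{align*}
\psi(b)-2\psi(b+a)+2\psi(b+3a)-\psi(b+4a)\le 0,
\end{align*}
equivalently
\begin{align*}
\psi(b+3a)-\psi(b+a)\le \tfrac12\bigl[\psi(b+4a)-\psi(b)\bigr].
\end{align*}

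Second, apply the 2-alternation inequality at base point $s=0$ with $(s_1,s_2)=(4a,b)$; using $\psi(0)=0$ this collapses to
\begin{align*}
\psi(b+4a)-\psi(b)\le \psi(4a).
\end{align*}
Chaining the two bounds yields the claim.

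The only real obstacle is spotting the specific choice $(2a,a,a)$ for the 3-alternation increments: a symmetric choice such as $(a,a,a)$ produces the wrong telescoping and fails to expose $\psi(b+3a)-\psi(b+a)$ with the correct coefficient $\tfrac12$. Once that combinatorial identity is seen, the rest is routine. Note also that the hypothesis $\psi(0)=0$ plays its role only in the 2-alternation step, where it turns the inequality $\psi(b+4a)+\psi(0)\le \psi(b)+\psi(4a)$ into the tight bound $\psi(b+4a)-\psi(b)\le \psi(4a)$; without it the conclusion would weaken to $\psi(3a+b)-\tfrac12\psi(4a)\le \psi(a+b)-\tfrac12\psi(0)$.
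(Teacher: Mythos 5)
Your proof is correct and is essentially the paper's argument: both express the target quantity as a nonpositive combination of one second-order alternating difference and one half of a third-order alternating difference with the key increment pattern $(2a,a,a)$, using $\psi(0)=0$. The only (immaterial) difference is the placement of base points --- the paper writes the quantity as $\Delta_{2a}\Delta_{b}\psi(a) + \tfrac{1}{2}\Delta_{2a}\Delta_{a}\Delta_{a}\psi(0)$, whereas you use $\tfrac{1}{2}\Delta_{2a}\Delta_{a}\Delta_{a}\psi(b) + \tfrac{1}{2}\Delta_{4a}\Delta_{b}\psi(0)$.
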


\begin{proof} 
The assertion follows from
\begin{align*}
&-\psi(a+b)+\psi(3a+b)-\frac{1}{2} \psi(4a) \\
={}&\big[\psi(a)-\psi(3a)-\psi(a+b)+\psi(3a+b)\big] \\
&+ \frac{1}{2}
\big[\psi(0)-\psi(2a)-\psi(a)-\psi(a)+\psi(3a)+\psi(3a)+\psi(2a)-\psi(4a)\big] \\
={}&\Delta_{2a}\Delta_{b}\psi(a) + \frac{1}{2} \Delta_{2a}\Delta_{a}\Delta_{a}\psi(0) \leq 0 \qedhere
\end{align*}
\end{proof}

\begin{proof}[Proof of \textbf{\upshape Proposition~\ref{prop:non-convex}}]
We need to show that 
\begin{align} \label{eq:non-convex-to-show}
\var\bigg(W_0(\bm{x}) - \frac{1}{2^d} \sum_{A \subset \{1,\dots,d\}} W_0(\bm{v}_A)\bigg) \leq \sup_{\bm{x} \in K} \var(W_0(\bm{x}))
\end{align}
for all $\bm{x} \in K$. Since
\begin{align*}
2 \,\cov(W_0(\bm{x})-W_0(\bm{y}),W_0(\bm{x'})-W_0(\bm{y'}))
= \gamma(\bm{x}-\bm{y'}) + \gamma(\bm{y}-\bm{x'}) - \gamma(\bm{x}-\bm{x'}) - \gamma(\bm{y}-\bm{y'}),
\end{align*}
the left-hand side of \eqref{eq:non-convex-to-show} equals
\begin{align*}
& \var\bigg(W_0(\bm{x}) - \frac{1}{2^d} \sum_{A \subset \{1,\dots,d\}} W_0(\bm{v}_A)\bigg)\\
={} & \frac{1}{2^{2d}} \sum_{A \subset \{1,\dots,d\}} \sum_{B \subset \{1,\dots,d\}} \cov\bigg( W_0(\bm{x}) - W_0(\bm{v}_A), W_0(\bm{x}) - W_0(\bm{v}_B) \bigg)\\
={} & \frac{1}{2^{2d+1}} \sum_{A \subset \{1,\dots,d\}} \sum_{B \subset \{1,\dots,d\}} \gamma( \bm{x} - \bm{v}_A ) + \gamma(\bm{x} - \bm{v}_B) + \gamma(\bm{v}_A - \bm{v}_B)\\
={} & \frac{1}{2^{d}} \sum_{A \subset \{1,\dots,d\}}  \gamma( \bm{x} - \bm{v}_A )  - \frac{1}{2} \gamma( \bm{v}_{\emptyset} - \bm{v}_A).
\end{align*}
On the other hand, since $W_0(\bm{0})=0$ and $\gamma(\bm{h})=\psi(\lVert\bm{h}\rVert^2)$
for a monotonously increasing function $\psi$, the right-hand side of 
\eqref{eq:non-convex-to-show} coincides with $\gamma(\bm{v}_A)$ for
any vertex $\bm{v}_A$ of $K$. Hence, we need to show that for all 
$\bm{x} \in K$
\begin{align*}
\frac{1}{2^{d}} \sum_{A \subset \{1,\dots,d\}}  \gamma( \bm{x} - \bm{v}_A )  - \frac{1}{2} \gamma( \bm{v}_{\emptyset} - \bm{v}_A) \leq \gamma(\bm{v}_{\emptyset} ), 
\end{align*}
which is equivalent to
\begin{align*}
\frac{1}{2^{d}} \sum_{A \subset \{1,\dots,d\}}  \psi\bigg( \sum_{i \in A} (x_i - R_i)^2 + \sum_{j \in A^c} (x_j + R_j)^2 \bigg)  - \frac{1}{2} \psi\bigg( 4 \sum_{i \in A} R^2_i\bigg) \leq \psi\big(R_1^2+\dots+R_d^2\big). 
\end{align*}
By Lemma~\ref{lemma:TWOalternating}, it suffices to show that 
\begin{align*}
\frac{1}{2^{d}} \sum_{A \subset \{1,\dots,d\}}  \psi\bigg( 3 \sum_{i \in A} R^2_i + \sum_{j \in A^c} R_j^2 \bigg)  - \frac{1}{2} \psi\bigg( 4 \sum_{i \in A} R^2_i\bigg) \leq \frac{1}{2^{d}} \sum_{A \subset \{1,\dots,d\}}
\psi\bigg(\sum_{i \in A} R^2_i + \sum_{j \in A^c} R_j^2\bigg). 
\end{align*}
The latter follows from Lemma~\ref{lemma:THREEalternating}.
\end{proof}

\textbf{Acknowledgements.} 
The authors would like to thank an anonymous referee for a lot of  well-conceived feedback on our work including the suggestion to include the Dieker-Mikosch algorithm in our simulation study. The authors would also like to thank Stilian Stoev and Holger Drees for their very helpful comments during EVA 2017 in Delft, which lead to rethinking the error assessment. The sole responsibility for all directions taken lies, of course, with the authors.

{

}

\end{document}